\documentclass[preprint]{iacrtrans}


\usepackage[dvipsnames,svgnames,x11names]{xcolor}
\usepackage{caption}\captionsetup{labelfont={sf,bf}}
\usepackage{subcaption}
\usepackage{enumitem}
\usepackage{pgfplots}\pgfplotsset{compat=1.18}
\usepackage{orcidlink}
\usepackage{cleveref}

\setcounter{tocdepth}{3}


\setlist[enumerate,1]{label={(\arabic*)}}

\allowdisplaybreaks

\theoremstyle{plain}
\newtheorem{thm}{Theorem}[section]
\newtheorem{lem}[thm]{Lemma}
\newtheorem{cor}[thm]{Corollary}
\newtheorem{prop}[thm]{Proposition}

\newtheorem{defn}[thm]{Definition}

\theoremstyle{definition}
\newtheorem{rem}[thm]{Remark}
\newtheorem{ex}[thm]{Example}
\theoremstyle{plain}

\newcommand{\F}{\mathbb{F}}                                         
\newcommand{\Fp}{\F_{p}}                                            
\newcommand{\Fq}{\F_{q}}                                            
\newcommand{\Fqn}{\Fq^{n}}                                          
\newcommand{\Fqm}{\Fq^{m}}                                          
\newcommand{\Fpx}{\Fp^{\times}}                                     
\newcommand{\Fqx}{\Fq^{\times}}                                     
\newcommand{\abs}[1]{\left\vert #1 \right\vert}                     
\newcommand{\C}{\mathbb{C}}                                         
\DeclareMathOperator{\Frac}{Frac}                                   
\DeclareMathOperator{\id}{id}                                       
\DeclareMathOperator{\imag}{im}                                     
\newcommand{\legendre}[2]{\left( \frac{#1}{#2} \right)}             
\newcommand{\Q}{\mathbb{Q}}                                         
\DeclareMathOperator{\Tr}{Tr}                                       
\newcommand{\Z}{\mathbb{Z}}                                         
\DeclareMathOperator{\CORR}{CORR}                                   
\newcommand{\degree}[1]{\deg \left( #1 \right)}                     
\DeclareMathOperator{\wt}{wt}                                       

\newcommand{\Anemoi}{\texttt{Anemoi}}
\newcommand{\Friday}{\textsc{Friday}}
\newcommand{\Grendel}{\textit{Grendel}}
\newcommand{\Jarvis}{\textsc{Jarvis}}
\newcommand{\Monolith}{\texttt{Monolith}}
\newcommand{\Polocolo}{\textsf{Polocolo}}
\newcommand{\ReinforcedConcrete}{\texttt{Reinforced Concrete}}

\title{A Note on the Walsh Spectrum of Power Residue S-Boxes}

\title{A Note on the Walsh Spectrum of Power Residue S-Boxes}
\author{Matthias Johann Steiner \orcidlink{0000-0001-5206-6579}}
\authorrunning{M.~J.~Steiner}
\institute{Alpen-Adria-Universit\"at Klagenfurt, Klagenfurt am W\"orthersee, Austria \\ \email{mattsteiner@edu.aau.at}}

\begin{document}

    \maketitle

    \keywords[Power residue S-Box, Walsh spectrum, Grendel, Polocolo]{Power residue S-Box, Walsh spectrum, \Grendel, \Polocolo}

    \begin{abstract}
        Let $\mathbb{F}_q$ be a prime field with $q \geq 3$, and let $d, m \geq 1$ be integers such that $\gcd \left( d, q \right) = 1$ and $m \mid (q - 1)$.
        In this paper we bound the absolute values of the Walsh spectrum of S-Boxes $S (x) = x^d \cdot T \left( x^\frac{q - 1}{m} \right)$, where $T$ is a function with $T (x) \neq 0$ if $x \neq 0$.
        Such S-Boxes have been proposed for the Zero-Knowledge-friendly hash functions \textit{Grendel} and \textsc{Polocolo}.
        In particular, we prove the conjectured correlation of the \textsc{Polocolo} S-Box.
    \end{abstract}

    \section{Introduction}
    Let $\Fq$ be a finite field, for $m \in \Z_{> 1}$ such that $m \mid (q - 1)$ we call the map $x \mapsto \legendre{x}{q}_m = x^\frac{q - 1}{m}$ the $m$\textsuperscript{th} power residue, since it maps the elements of $\Fq \setminus \{ 0 \}$ to a subgroup of order $m$.
    For $q$ an odd prime, the canonical example of a power residue is the Legendre symbol
    \begin{equation}
        \legendre{x}{q}_2 =
        \begin{cases}
            0, & x = 0, \\
            1, & x \text{ is a square in } \Fq, \\
            -1, & x \text{ is a non-square in } \Fq.
        \end{cases}
    \end{equation}

    \Grendel{} \cite{EPRINT:Szepieniec21} and \Polocolo{} \cite{EC:HHLPS25} are Zero-Knowledge-friendly (ZK) hash functions defined over prime fields $\Fq$.
    Both are based on the Substitution-Permutation Network.
    As Substitution Box (S-Box) \Grendel{} applies
    \begin{equation}\label{Equ: Grendel S-Box}
        S (x) = x^d \cdot \legendre{x}{q}_2 = x^{d + \frac{q - 1}{2}},
    \end{equation}
    where $d$ is small relative to $q$, and \Polocolo{} applies
    \begin{equation}\label{Equ: power residue S-Box}
        S (x) = x^{q - 2} \cdot T \Bigg( \legendre{x}{q}_m \Bigg),
    \end{equation}
    where $m \in \Z_{> 1}$ is such that $m \mid (q - 1)$ and $T$ is a function on the subgroup of order $m$ of $\Fq \setminus \{ 0 \}$.
    Following the \Polocolo{} designers we call S-Boxes as in \Cref{Equ: Grendel S-Box,Equ: power residue S-Box} \emph{power residue S-Boxes}.

    ZK-friendly hash functions have two novel design criteria compared to their bit-oriented predecessors.
    First, they must be native over a relatively large finite field $\Fq$, where $30 \leq \log_2 \left( q \right) \leq 256$ and typically $q$ is prime.
    Since one of their main applications is the proof of a preimage relation $\mathcal{H} (\mathbf{m}) = \mathbf{h}$, they must also admit efficient ZK-prover circuits.
    To meet this requirement, ZK-friendly hash functions require an efficient arithmetic circuit representation.
    Broken down to the S-Box level, a prover circuit for $S (x) = y$, where $x, y \in \Fq$, must be constructible with a low number of additions and multiplications.
    An obvious choice to meet this criterion are low-degree power permutations $S (x) = x^d$, where $d \in \{ 3, 5, 7 \}$, together with the naive evaluation circuit.
    Another attractive choice is the inverse permutation $S (x) = x^{q - 2}$ of $\Fq$.
    In case that $x \neq 0$, we have the circuit $x \cdot S (x) = x \cdot y = 1$, which requires a single multiplication.
    To cover $x = 0$, we can consider $x^2 \cdot S (x) = x^2 \cdot y = x$, which requires just two multiplications.
    However, if $q$ is large enough, then the probability of $x \neq 0$ (for uniform $x \in \Fq$) is $1 - \frac{1}{q}$, so in practice one can still use $x \cdot y = 1$.
    To the detriment of the inverse S-Box, the arithmetic circuits can also be utilized to set up low degree polynomial models.
    For example, the early ZK-friendly designs \Jarvis{} and \Friday{} \cite{EPRINT:AshDho18} were quickly identified as vulnerable against Gr\"obner basis attacks \cite{AC:ACGKLR19}.
    Since then, the inverse permutation has been discarded for the instantiation of ZK-friendly hash functions, although it enjoys many excellent cryptanalytic properties.
    E.g., a high degree, a low differential uniformity and a high non-linearity.
    \Polocolo{} can be viewed as a fresh attempt to deploy the inverse permutation in ZK hash functions.
    According to the designer's analysis \cite{EC:HHLPS25}, the multiplication with a power residue symbol weakens Gr\"obner basis attacks while maintaining an efficient prover circuit, where the power residue part is proven via a look-up table argument.
    On the other hand, \Grendel's S-Box $S (x) = x^d \cdot \legendre{x}{q}_2$ tries to derive many cryptanalytic properties from $x^d$ while having a larger degree.
    In particular, as prover circuit for $S (x) = y$ one can use $S (x)^2 = x^{2 \cdot d} = y^2$.
    Unfortunately, \Grendel{} has also been demonstrated to be susceptible against Gr\"obner basis attacks \cite{ToSC:GKRS22}.

    Linear cryptanalysis \cite{EC:Matsui93,SAC:BaiSteVau07,AC:Beyne21} is a variant of statistical cryptanalysis which studies linear relations between inputs and outputs of a cryptographic function.
    Resistance of an S-Box $S: \Fq \to \Fq$ is related to the absolute values of the Walsh spectrum of $F$, i.e.\ exponential sums of the form
    \begin{equation}
        \mathcal{W}_S (\psi, X, a, b) = \sum_{x \in X} \psi \big( a \cdot x + b \cdot S (x) \big),
    \end{equation}
    where $X \subset \Fq$,  $\psi: \Fq \to \C$ is a non-trivial additive character and $a, b \in \Fq$.
    (For linear cryptanalysis one usually has $X = \Fq$.)
    While being a standard tool for the analysis of Boolean primitives, the impact of linear cryptanalysis on large prime field primitives is still an open research problem.
    Low degree univariate functions like $S (x) = x^d$ admit an absolute Walsh spectrum of the order $\mathcal{O} \left( q^\frac{1}{2} \right)$ due to Weil's bound \cite{Weil-Bound}.
    However, many ZK-friendly primitives also introduced novel exotic S-Boxes and the estimation of their Walsh spectra poses an intricate problem, see for example the round functions of \Grendel{} \cite{EPRINT:Szepieniec21}, \ReinforcedConcrete{} \cite{CCS:GKLRSW22}, \Anemoi{} \cite{C:BBCPSV23}, \Monolith{} \cite{ToSC:GKLRSW24} and \Polocolo{} \cite{EC:HHLPS25}.
    Henceforth, cryptodesigners, like the \Polocolo{} team, often base their initial linear cryptanalysis on conjectures derived from empirical observations.

    The main research goal of this work is to provide proven bounds on the Walsh spectrum of power residue S-Boxes.
    In particular, we confirm a conjecture of the \Polocolo{} designers \cite[Conjecture~1]{EC:HHLPS25}.
    Hence, our work complements the analysis presented in \cite[\S 5.1]{EC:HHLPS25}, and provides a well-founded argument for the resistance of \Polocolo{} against linear cryptanalysis.

    It is worthwhile to mention that for $X = \Fq$ the Walsh spectrum of the \Polocolo{} S-Box can be expressed in terms of \emph{Kloosterman sums} over a subgroup $\mathcal{G} \subset \Fqx$
    \begin{equation}\label{Equ: Klosterman sum over subgroup}
        K (\psi, \mathcal{G}, a, b) = \sum_{x \in \mathcal{G}} \psi \left( a \cdot x + b \cdot x^{-1} \right),
    \end{equation}
    see \cite[\S 5.1]{EC:HHLPS25} or the proof of \Cref{Cor: Walsh spectrum power residue S-Box inverse}.
    In case that $\mathcal{G} = \Fqx$ and $a, b \in \Fqx$ it is well-known that $\abs{K (\psi, \mathcal{G}, a, b)} \leq 2 \cdot q^\frac{1}{2}$ \cite[5.45.~Theorem]{Lidl-FiniteFields}.
    However, for non-trivial subgroups the \Polocolo{} designers were unable to provide a theoretical bound on the absolute value of \Cref{Equ: Klosterman sum over subgroup}.
    For primes $q < 1000$, $m \in \{ 2 , 4, 8 , 16 \}$ and subgroups $\mathcal{G} \subset \Fqx$ of size $\abs{\mathcal{G}} = \frac{q - 1}{m}$ they empirically observed that $\max_{a, b \in \Fqx} \abs{K (\psi, \mathcal{G}, a, b)} \leq 2 \cdot q^\frac{1}{2}$.
    Based on this observation, the designers conjectured for $q$ prime and $\abs{\mathcal{G}} = \frac{q - 1}{m}$ that \cite[Conjecture~1]{EC:HHLPS25}
    \begin{equation}
        \max_{a, b \in \Fqx} \abs{K (\psi, \mathcal{G}, a, b)} \leq 4 \cdot q^\frac{1}{2},
    \end{equation}
    which was applied for linear cryptanalysis.
    In this note we prove their original empirical observation, namely that for any subgroup $\mathcal{G} \subset \Fqx$ we have
    \begin{equation}
        \max_{a, b \in \Fqx} \abs{K (\psi, \mathcal{G}, a, b)} \leq 2 \cdot q^\frac{1}{2},
    \end{equation}
    and henceforth yield estimations for the full Walsh spectrum  of power residue S-Boxes.
    Our estimation of $\abs{K (\psi, \mathcal{G}, a, b)}$ applies the Weil bound of exponential sums with rational arguments \cite[Theorem~2]{Moreno-Weil-I}.

    Character and Kloosterman sums over $\Fq$ and $\Fqx$ respectively are well-studied objects in the mathematical literature with numerous applications in algebraic geometry, coding theory, cryptography and number theory.
    Though, the special case of a Kloosterman sum over a non-trivial subgroup $\mathcal{G} \subset \Fqx$ seems to be largely absent from the literature.
    One of the rare examples is the work of Ostafe, Shparlinski, and Voloch \cite{Ostafe-Equations}, who estimate \Cref{Equ: Klosterman sum over subgroup} for small subgroups to study the quantum ergodicity of linear maps.

    Additionally, we investigate the Walsh spectrum of power residue S-Boxes, where $x^{q - 2}$ in \Cref{Equ: power residue S-Box} is replaced by some $x^d$ with $d > 1$ and $\gcd \left( d, q \right) = 1$.
    With the classical Weil bound \cite{Weil-Bound} we prove that
    \begin{equation}
        \max_{a, b \in \Fqx} \abs{\mathcal{W}_S (\psi, \Fq, a, b)} \leq (d \cdot m - 1) \cdot q^\frac{1}{2} + 2.
    \end{equation}
    For example, with $m = 2$ this estimation applies to the \Grendel{} S-Box from \Cref{Equ: Grendel S-Box}.

    On the other hand, for power residue S-Boxes with $d = 1$ we prove that
    \begin{equation}
        \max_{a, b \in \Fqx} \abs{\mathcal{W}_S (\psi, \Fq, a, b)} \leq \mathcal{O} \left( \max \left\{ \frac{q - 1}{m}, m \cdot q^\frac{1}{2} \right\} \right).
    \end{equation}
    So, if $m \ll q$, then such S-Boxes admit a high correlation.

    We also investigate a Legendre symbol-based S-Box introduced by Grassi et al.\ \cite[\S 4.1]{ToSC:GKRS22}.
    Its Walsh spectrum will follow as corollary to our estimations for power residue S-Boxes.

    We note that recent Walsh spectrum estimations for cryptographic permutations over prime fields relied on advanced techniques from algebraic geometry \cite{EPRINT:BeyBou24b,Steiner-Flystel}.
    Our estimation of \Cref{Equ: Klosterman sum over subgroup} also falls into this category since the generalized Weil bound \cite[Theorem~2]{Moreno-Weil-I} is proven via the theory of curves over finite fields \cite[\S 5]{Moreno-Weil-I}.

    \section{Preliminaries}
    Let $K$ be a field, we denote with $\bar{K}$ its algebraic closure and with $K^\times = K \setminus \{ 0 \}$ its multiplicative group.
    The univariate polynomial ring over a field $K$ is denoted as $K [x]$.
    We denote the finite field with $q$ elements by $\Fq$.

    Let $\mathcal{G}$ be a finite group, a character $\psi$ on $\mathcal{G}$ is a group homomorphism $\psi: \mathcal{G} \to U = \left\{ z \in \C \mid \abs{z} = 1 \right\}$.
    For a finite field $\F_{p^n}$, the fundamental additive character is
    \begin{equation}
        \psi_1 \left( x \right) = \exp \left( \frac{2 \cdot \pi \cdot i  \cdot \Tr_{\F_{p^n} / \Fp} (x)}{p} \right),
    \end{equation}
    where $\Tr_{\F_{p^n} / \Fp}: \F_{p^n} \to \Fp$ denotes the absolute trace function of $\F_{p^n}$.
    Also, any non-trivial additive character $\psi$ of $\F_{p^n}$ is of the form $\psi (x) = \psi_1 (b \cdot x)$ for some $b \in \F_{p^n}^\times$, see \cite[5.7.~Theorem]{Lidl-FiniteFields}.

    For a set $X$ we denote its cardinality by $\abs{X}$.
    Likewise, we denote the order of a finite group $\mathcal{G}$ as $\abs{\mathcal{G}}$.

    Let $R$ be an integral domain, i.e.\ a commutative ring with unity that does not have any zero-divisors.
    Recall that the fraction field $\Frac \left( R \right)$ of $R$ is defined via the following equivalence relation $\sim$ on $R \setminus \{ 0 \} \times R \setminus \{ 0 \}$: $(a, b) \sim (c, d) \Leftrightarrow a \cdot d = b \cdot c$ in $R$.
    Elements of fraction fields are denoted as $\frac{a}{b}$.
    Standard examples are of fraction fields are $\Q = \Frac \left( \Z \right)$, and the field of rational functions $K (x) = \Frac \big( K [x] \big)$, where $K$ is an arbitrary field.

    In the literature there exist multiple non-equivalent definitions for the degree of a rational function.
    For this work, we work with the following definition.
    \begin{defn}\label[defn]{Def: degree rational function}
        Let $K$ be a field, and let $R (x) = \frac{F (x)}{G (x)} \in K (x) \setminus \{ 0 \}$.
        The degree of $R$ is defined as
        \begin{equation*}
            \degree{R} = \degree{F} - \degree{G}.
        \end{equation*}
        For $R = 0$ we set $\degree{R} = -\infty$.
    \end{defn}

    We have the following elementary properties for the degree.
    \begin{lem}\label[lem]{Lem: degree rational function}
        Let $K$ be a field, and let $R_1 (x), R_2 (x) \in K (x)$.
        \begin{enumerate}
            \item $\degree{R_1 \cdot R_2} = \degree{R_1} + \degree{R_2}$.

            \item If $R_1 (x) = R_2 (x)$, then $\degree{R_1} = \degree{R_2}$.
        \end{enumerate}
    \end{lem}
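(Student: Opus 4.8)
The plan is to reduce both claims to the corresponding facts for polynomials over $K$, using that $K [x]$ is an integral domain and hence that $\degree{P \cdot Q} = \degree{P} + \degree{Q}$ for all nonzero $P, Q \in K [x]$. Write $R_1 (x) = \frac{F_1 (x)}{G_1 (x)}$ and $R_2 (x) = \frac{F_2 (x)}{G_2 (x)}$, where by the construction of $\Frac \big( K [x] \big)$ we may take $F_i, G_i \in K [x] \setminus \{ 0 \}$; by \Cref{Def: degree rational function} we then have $\degree{R_i} = \degree{F_i} - \degree{G_i}$.

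For the first claim, I would observe that in $K (x)$ one has $R_1 \cdot R_2 = \frac{F_1 \cdot F_2}{G_1 \cdot G_2}$, and that $F_1 \cdot F_2$ and $G_1 \cdot G_2$ are again nonzero since $K [x]$ has no zero divisors. Applying \Cref{Def: degree rational function} together with multiplicativity of the polynomial degree,
\begin{align*}
    \degree{R_1 \cdot R_2} &= \degree{F_1 \cdot F_2} - \degree{G_1 \cdot G_2} \\
        &= \big( \degree{F_1} + \degree{F_2} \big) - \big( \degree{G_1} + \degree{G_2} \big) \\
        &= \big( \degree{F_1} - \degree{G_1} \big) + \big( \degree{F_2} - \degree{G_2} \big) = \degree{R_1} + \degree{R_2}.
\end{align*}

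For the second claim, the point is that the equality $R_1 (x) = R_2 (x)$ in $K (x)$ unwinds, via the equivalence relation defining $\Frac \big( K [x] \big)$, to the polynomial identity $F_1 \cdot G_2 = F_2 \cdot G_1$ in $K [x]$. Taking degrees on both sides and again using multiplicativity of the polynomial degree yields $\degree{F_1} + \degree{G_2} = \degree{F_2} + \degree{G_1}$, which rearranges to $\degree{F_1} - \degree{G_1} = \degree{F_2} - \degree{G_2}$, i.e.\ $\degree{R_1} = \degree{R_2}$ by \Cref{Def: degree rational function}.

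I do not expect any real obstacle here; the only subtlety worth flagging is that claim (2) is exactly the statement that the quantity in \Cref{Def: degree rational function} is independent of the chosen representative $\frac{F}{G}$, so that the degree is genuinely well-defined on $K (x)$ — and the computation above is precisely what establishes this.
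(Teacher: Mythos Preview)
Your proof is correct and follows essentially the same route as the paper: write $R_i = F_i / G_i$, use the additivity of the polynomial degree under multiplication for (1), and unwind the equivalence relation $F_1 G_2 = F_2 G_1$ for (2). Your closing remark that (2) amounts to well-definedness of the degree on $K(x)$ is a nice observation the paper leaves implicit.
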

    \begin{proof}
        Let $R_1 (x) = \frac{F_1 (x)}{G_1 (x)}$ and $R_2 (x) = \frac{F_2 (x)}{G_2 (x)}$.
        By definition and elementary properties of the polynomial degree we have
        \begin{align*}
            \degree{R_1 \cdot R_2}
            &= \degree{\frac{F_1 \cdot F_2}{G_1 \cdot G_2}} \\
            &= \degree{F_1 \cdot F_2} - \degree{G_1 \cdot G_2} \\
            &= \degree{F_1} - \degree{G_1} + \degree{F_2} - \degree{G_2} \\
            &= \degree{R_1} + \degree{R_2}.
        \end{align*}
        Note that the second equality is always satisfied if $R_1 = 0$, so for non-triviality let $R_1 \neq 0$.
        Then, $R_1 = R_2 \neq 0$ in $K (x)$ if and only if $F_1 \cdot G_2 = F_2 \cdot G_1$ in $K [x]$, but then
        \begin{align*}
            \degree{F_1 \cdot G_2} &= \degree{F_2 \cdot G_1} \\
            \Rightarrow \degree{F_1} - \degree{G_1} &= \degree{F_2} - \degree{G_2} \\
            \Rightarrow \degree{R_1} &= \degree{R_2}. \qedhere
        \end{align*}
    \end{proof}

    \subsection{Power Residue S-Boxes}\label{Sec: Power Residue S-Boxes}
    First, let us formally define power residue S-Boxes.
    \begin{defn}\label[defn]{Def: power residue S-Box}
        Let $\Fq$ be a finite field, and let $m \in \Z_{> 1}$ be such that $m \mid (q - 1)$.
        \begin{enumerate}
            \item The $m$\textsuperscript{th} power residue is defined as
            \begin{equation*}
                \legendre{x}{q}_m = x^\frac{q - 1}{m}.
            \end{equation*}

            \item Let $d \in \Z_{\geq 1}$, and let $T \in \Fq [x]$ be such that $0 \notin \imag \bigg( T \circ \legendre{x}{q}_m \bigg)$.
            The power residue S-Box of $(d, m, T)$ is defined as
            \begin{equation*}
                S (x) = x^{d} \cdot T \Bigg( \legendre{x}{q}_m \Bigg).
            \end{equation*}
        \end{enumerate}
    \end{defn}

    With an appropriate choice for $d$ and $T$ the power residue S-Box $S$ can be made a bijection on $\Fq$, see e.g.\ \cite{Shallue-Permutation,EPRINT:Szepieniec21,EC:HHLPS25}.

    For completeness, let us quickly verify that the $m$\textsuperscript{th} power residue indeed maps an element $x \in \Fqx$ to a subgroup $\mathcal{G} \subset \Fqx$ of order $m$.
    It is well-known that the multiplicative group of a finite field $\Fq$ is cyclic \cite[2.8.~Theorem]{Lidl-FiniteFields}.
    Let $g \in \Fqx$ be a cyclic generator, then for any $d \in \Z_{\geq 1}$ the element $g^d$ generates a cyclic subgroup of order $\frac{q - 1}{\gcd \left( d, q - 1 \right)}$ \cite[1.15.~Theorem]{Lidl-FiniteFields}.
    In particular, for $m \mid (q - 1)$ the subgroup $\left< g^\frac{q - 1}{m} \right> = \left\{ 1, g^\frac{q - 1}{m}, g^{2 \cdot \frac{q - 1}{m}}, \ldots, g^{(m - 1) \cdot \frac{q - 1}{m}} \right\} \subset \Fqx$ is of order $m$, and henceforth by uniqueness of cyclic subgroups $\left< g^\frac{q - 1}{m} \right> = \mathcal{G}$ \cite[1.15.~Theorem]{Lidl-FiniteFields}.
    Now let us consider the monomial map $\legendre{x}{q}_m: x \mapsto x^\frac{q - 1}{m}$.
    By division with remainder any $x \in \Fqx$ can be written as $x = g^k = g^{Q \cdot m + R}$, where $k, Q, R \in \Z$ with $0 \leq k \leq q - 2$, $0 \leq Q \leq \frac{q - 1}{m} - 1$, $0 \leq R < m$ and $k = Q \cdot m + R$.
    Therefore,
    \begin{equation}
        \legendre{x}{q}_m = x^\frac{q - 1}{m} = g^{\left( Q \cdot m + R \right) \cdot \frac{q - 1}{m}} = g^{Q \cdot (q - 1)} \cdot g^{R \cdot \frac{q - 1}{m}} = g^{R \cdot \frac{q - 1}{m}} \in \left< g^\frac{q - 1}{m} \right> = \mathcal{G}.
    \end{equation}
    I.e., we can view the $m$\textsuperscript{th} power residue symbol as the projection of $\Fqx$ onto $\mathcal{G}$.
    Moreover, by construction we have for any $y \in \mathcal{G}$ that
    \begin{equation}
        \abs{\legendre{x}{q}_m^{-1} (y)} = \frac{q - 1}{m}.
    \end{equation}

    We provide some examples of power residue S-Boxes which have been introduced in the literature.
    Conditions for the S-Boxes to induce permutations can be found in the respective references.
    \begin{ex}
        Let $\Fq$ be a finite field, and let $d, m \in \Z_{\geq 1}$ be such that $m \mid (q - 1)$.
        \begin{enumerate}
            \item \cite[Theorem~1.21]{Shallue-Permutation} $S (x) = x \cdot \bigg( \legendre{x}{q}_m - a \bigg)$, where $a \in \Fq \setminus \imag \bigg( \legendre{x}{q}_m \bigg)$.

            \item \cite[\S 3]{EPRINT:Szepieniec21} $S (x) = x^d \cdot \legendre{x}{q}_2$, where $q$ is odd.

            \item \cite[Proposition~3]{ToSC:GKRS22} $S (x) = x^d \cdot \bigg( \legendre{x}{q}_2 + a \bigg)$, where $q$ is odd and $a \in \Fq \setminus \{ \pm 1 \}$.

            \item \cite[\S 3]{EC:HHLPS25} $S (x) = x^{q - 2} \cdot T \bigg( \legendre{x}{q}_m \bigg)$, $0 \notin \imag \bigg( T \circ \legendre{x}{q}_m \bigg)$.
        \end{enumerate}
    \end{ex}

    \subsection{Walsh Transform}
    The Walsh transform is an important mathematical tool to study cryptographic properties of S-Boxes.
    In particular, it encompasses the resistance of an S-Box against linear cryptanalysis.
    \begin{defn}\label[defn]{Def: Walsh transform}
        Let $\Fq$ be a finite field, let $n, m \in \Z_{\geq 1}$, let $\psi: \Fq \to \C$ be a non-trivial additive character, let $F: \Fqn \to \Fqm$ be a function, and let $\mathbf{a} \in \Fqn$ and $\mathbf{b} \in \Fqm$.
        \begin{enumerate}
            \item The Walsh transform for the character $\psi$ of the linear approximation $(\mathbf{a}, \mathbf{b})$ of $F$ is defined as
            \begin{equation*}
                \mathcal{W}_F (\psi, \mathbf{a}, \mathbf{b}) = \sum_{\mathbf{x} \in \Fqn} \psi \big( \mathbf{a}^\intercal \mathbf{x} + \mathbf{b}^\intercal F (\mathbf{x}) \big).
            \end{equation*}

            \item Let $X \subset \Fqn$.
            The Walsh transform over $X$ for the character $\psi$ of the linear approximation $(\mathbf{a}, \mathbf{b})$ of $F$ is defined as
            \begin{equation*}
                \mathcal{W}_F (\psi, X, \mathbf{a}, \mathbf{b}) = \sum_{\mathbf{x} \in X} \psi \big( \mathbf{a}^\intercal \mathbf{x} + \mathbf{b}^\intercal F (\mathbf{x}) \big).
            \end{equation*}
        \end{enumerate}
    \end{defn}

    In the literature, the values $\mathcal{W}_F (\psi, \mathbf{a}, \mathbf{b})$ are also known as the Walsh spectrum of $F$.

    \begin{rem}
        \begin{enumerate}
            \item In more generality, the Walsh transform can also be defined via two non-trivial additive characters $\psi, \phi: \Fq \to \C$
            \begin{equation*}
                \mathcal{W}_F (\psi, \phi, \mathbf{a}, \mathbf{b}) = \sum_{\mathbf{x} \in \Fqn} \psi \big( \mathbf{a}^\intercal \mathbf{x} \big) \cdot \phi \big( \mathbf{b}^\intercal F (\mathbf{x}) \big).
            \end{equation*}
            Since non-trivial additive characters are equivalent up to a multiplicative constant, i.e.\ there exists $c \in \Fqx$ such that $\phi (x) = \psi (c \cdot x)$ \cite[5.7.~Theorem]{Lidl-FiniteFields}, the more general definition is equivalent to our one up to a multiplicative constant.

            \item In modern formulations of linear cryptanalysis the correlation of a function $F$ at the linear approximation $(\mathbf{a}, \mathbf{b})$ is defined as, see e.g.\ \cite[Definition~3.3]{AC:Beyne21} and the equation thereafter,
            \begin{equation*}
                \CORR_F (\psi, \phi, \mathbf{a}, \mathbf{b}) = \frac{\mathcal{W}_F \big( \psi, \overline{\phi}, \mathbf{a}, \mathbf{b} \big)}{q^n},
            \end{equation*}
            where $\overline{z}$ denotes complex conjugation. I.e., $\overline{\phi (x)} = \phi (-x)$.
            Hence, up to a multiplicative constant the correlation also reduces to our definition of the Walsh transform.
        \end{enumerate}
    \end{rem}

    \subsection{Exponential Sums With Rational Arguments}
    Let $X \subset \Fq$, let $F: \Fq \to \Fq$ be a function, and let $\psi: \Fq \to \C$ be an additive character.
    An important problem in number theory is the estimation of the character sum $\abs{\sum_{x \in X} \psi \big( F (x) \big)}$.
    A fundamental result for sums with polynomial arguments is Weil's bound \cite{Weil-Bound}.
    \begin{thm}[{\cite[5.38.~Theorem]{Lidl-FiniteFields}}]\label[thm]{Th: Weil bound}
        Let $\Fq$ be a finite field, let $\psi: \Fq \to \C$ be a non-trivial additive character, and let $f \in \Fq [x]$ be such that $\gcd \big( \degree{f}, q \big) = 1$.
        Then
        \begin{equation*}
            \abs{\sum_{x \in \Fq} \psi \big( f (x) \big)} \leq \big( \degree{f} - 1 \big) \cdot q^\frac{1}{2}.
        \end{equation*}
    \end{thm}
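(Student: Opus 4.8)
The statement is the classical Weil bound for complete exponential sums, so the plan is to translate the character sum into a point count on an Artin--Schreier curve and then invoke the Riemann hypothesis for curves over finite fields. Set $p = \cha(\Fq)$. Recall that every non-trivial additive character of $\Fq$ equals $\psi_1(b \cdot -)$ for some $b \in \Fqx$, and that $x \mapsto b \cdot f(x)$ has the same degree as $f$; hence it suffices to bound $\abs{\sum_{x \in \Fq} \psi_1\big(g(x)\big)}$ for an arbitrary $g \in \Fq[x]$ with $\gcd\big(\degree{g}, q\big) = 1$, which is the same as $\gcd\big(\degree{g}, p\big) = 1$. If $\degree{g} \leq 1$ both the sum and the claimed bound vanish, so I would assume $\degree{g} \geq 2$.

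Next, I would attach to $g$ the affine Artin--Schreier curve $C : y^p - y = g(x)$ over $\Fq$ and let $\overline{C}$ denote its smooth projective model. Because $\wp : \Fq \to \Fq$, $\wp(y) = y^p - y$, is $\Fp$-linear with kernel $\Fp$ and image $\{ c \in \Fq : \Tr_{\Fq/\Fp}(c) = 0 \}$, orthogonality of the characters of $(\Fp, +)$ gives
\begin{align*}
    \# C(\Fq) &= p \cdot \# \big\{ x \in \Fq : \Tr_{\Fq/\Fp}\big(g(x)\big) = 0 \big\} \\
    &= \sum_{a \in \Fp} \sum_{x \in \Fq} \psi_1\big(a \cdot g(x)\big) = q + \sum_{a = 1}^{p - 1} \sum_{x \in \Fq} \psi_1\big(a \cdot g(x)\big).
\end{align*}
Since $\gcd\big(\degree{g}, p\big) = 1$, the degree-$p$ cover $\overline{C} \to \mathbb{P}^1_{\Fq}$ is unramified away from $x = \infty$ and totally (wildly) ramified there, with a single rational point above $\infty$ whose different exponent is $(p - 1)\big(\degree{g} + 1\big)$; the Riemann--Hurwitz formula then gives $\overline{C}$ the genus $\mathsf{g}_C = \frac{1}{2}(p - 1)\big(\degree{g} - 1\big)$, and $\# \overline{C}(\Fq) = q + 1 + \sum_{a = 1}^{p - 1}\sum_{x \in \Fq}\psi_1\big(a \cdot g(x)\big)$.

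Applying the Hasse--Weil bound $\abs{\# \overline{C}(\Fq) - (q + 1)} \leq 2\,\mathsf{g}_C \cdot q^{1/2}$ yields $\abs{\sum_{a = 1}^{p - 1}\sum_{x}\psi_1\big(a \cdot g(x)\big)} \leq (p - 1)\big(\degree{g} - 1\big) \cdot q^{1/2}$, which is still a sum of $p - 1$ sums of the same shape. To isolate the summand $a = 1$ I would use the action of the deck group $\Z/p$ (by $y \mapsto y + c$): the zeta function of $\overline{C}$ factors over its characters as a product of Artin $L$-functions $L_a(T)$, where $L_0$ is the zeta function of $\mathbb{P}^1$ and, for $a \neq 0$, $L_a(T) = \prod_{i = 1}^{\degree{g} - 1}(1 - \alpha_{a, i} T)$ is a polynomial of degree $\degree{g} - 1$ satisfying $\sum_{x}\psi_1\big(a \cdot g(x)\big) = -\sum_i \alpha_{a, i}$. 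The Riemann hypothesis forces $\abs{\alpha_{a, i}} = q^{1/2}$ for every $i$, whence $\abs{\sum_x \psi_1\big(a \cdot g(x)\big)} \leq \big(\degree{g} - 1\big) \cdot q^{1/2}$ for each $a$; taking $a = 1$ is the assertion.

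The deep ingredient, and the only genuine obstacle, is the Riemann hypothesis for curves over finite fields (Hasse in genus $1$, Weil in general); the rest is bookkeeping. Two points still need care: the hypothesis $\gcd\big(\degree{g}, p\big) = 1$ is used \emph{essentially} --- if $p \mid \degree{g}$ the place at infinity must first be simplified by an Artin--Schreier substitution, the genus count changes, and the bound can genuinely fail --- and the ramification and genus computation must be carried out with the wild Artin--Schreier formulas rather than with the tame Riemann--Hurwitz formula.
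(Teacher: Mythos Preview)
The paper does not prove this theorem at all: it is quoted verbatim from Lidl--Niederreiter as \cite[5.38.~Theorem]{Lidl-FiniteFields} and used as a black box throughout the rest of the paper. There is therefore nothing to compare your argument against.

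That said, your sketch is a correct outline of the standard Artin--Schreier proof. The reduction to the canonical character, the point-count identity for the affine curve $y^p - y = g(x)$, the genus $\tfrac{1}{2}(p-1)\big(\degree{g}-1\big)$ via wild Riemann--Hurwitz at the unique totally ramified place above $\infty$, and the isolation of the individual character sum through the $\Z/p\Z$-equivariant factorisation of the zeta function into degree-$(\degree{g}-1)$ $L$-polynomials are all accurate. Two small cosmetic points: your sentence ``if $\degree{g} \leq 1$ both the sum and the claimed bound vanish'' tacitly relies on the fact that $\degree{g}=0$ is already excluded by the hypothesis $\gcd\big(\degree{g},q\big)=1$, since for a nonzero constant the sum has absolute value $q$; and the claim that each $L_a$ has exactly $\degree{g}-1$ reciprocal roots is where the coprimality hypothesis is used a second time (via the conductor/Swan exponent at $\infty$), which you acknowledge in your closing remarks. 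The only non-elementary input is, as you say, the Riemann hypothesis for curves.
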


    Moreno \& Moreno generalized Weil's bound to rational functions $R \in \Fq (x)$.
    In case that $R$ has a pole at $a \in \Fq$, then $a$ is ignored in the summation.
    \begin{thm}[{\cite[Theorem~2]{Moreno-Weil-I}}]\label[thm]{Th: Weil bound for rational functions}
        Let $\Fq$ be a finite field of characteristic $p$, and let $R (x) = \frac{F (x)}{G (x)} \in \Fq (x)$ be a rational function such that
        \begin{equation*}
            R (x) \neq h (x)^p - h (x)
        \end{equation*}
        for any $h (x) \in \overline{\Fq} (x)$.
        Let $\mathcal{P} \subset \overline{\Fq}$ be the set of distinct roots of $G (x)$ over the algebraic closure, let $s = \abs{\mathcal{P}}$, and let $\psi: \Fq \to \C$ be a non-trivial additive character.
        Then
        \begin{equation*}
            \abs{\sum_{x \in \Fq \setminus \mathcal{P}} \psi \big( R (x) \big)}
            \leq \left( \max \left\{ \degree{F}, \degree{G} \right\} + s^\ast - 2 \right) \cdot q^\frac{1}{2} + \delta,
        \end{equation*}
        where $s^\ast =
        \begin{rcases}
            \begin{cases}
                s, & \degree{F} \leq \degree{G}, \\
                s + 1, & \degree{F} > \degree{G}
            \end{cases}
        \end{rcases}
        $ and $\delta =
        \begin{rcases}
            \begin{cases}
                1, & \degree{F} \leq \degree{G}, \\
                0, & \degree{F} > \degree{G}
            \end{cases}
        \end{rcases}
        .$
    \end{thm}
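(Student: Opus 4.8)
The plan is to relate the character sum to the number of $\Fq$-rational points on an Artin--Schreier cover of the projective line, and then to invoke the Riemann Hypothesis for curves over finite fields (Weil) together with a genus computation. After absorbing the additive character we may assume $\psi = \psi_1$: writing $\psi(x) = \psi_1(b \cdot x)$ for a suitable $b \in \Fqx$ and replacing $R$ by $b \cdot R = (b \cdot F) / G$ changes neither $\degree{F}$, nor $\degree{G}$, nor the set $\mathcal{P}$, and over a prime field it does not affect the standing hypothesis either, since $\big\{ h^p - h : h \in \overline{\Fq}(x) \big\}$ is an $\Fp$-linear subspace of $\overline{\Fq}(x)$. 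We may also assume $\gcd(F, G) = 1$. Consider the affine curve $C : y^p - y = R(x)$ over $\Fq$, let $\widetilde{C}$ be its smooth projective model, and let $\pi : \widetilde{C} \to \mathbb{P}^1$ be the resulting degree-$p$ morphism. The hypothesis $R \neq h^p - h$ for all $h \in \overline{\Fq}(x)$ is precisely the statement that the Artin--Schreier extension $\overline{\Fq}(x) \subset \overline{\Fq}(C)$ is non-trivial, equivalently that $\widetilde{C}$ is a geometrically irreducible curve and $\pi$ is genuinely ramified; this is exactly what makes the zeta function of $\widetilde{C}$ have the standard shape, so that the character sum is governed by the reciprocal roots of its numerator.

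Next I would count $\widetilde{C}(\Fq)$ fibrewise over $\mathbb{P}^1(\Fq)$. Over a finite $x_0 \in \Fq$ that is not a pole of $R$, the fibre of $\pi$ carries $p$ rational points if $\Tr_{\Fq / \Fp}\!\big( R(x_0) \big) = 0$ and none otherwise; since $p \cdot \mathbf{1}\!\big[ \Tr_{\Fq / \Fp}(c) = 0 \big] = \sum_{t \in \Fp} \psi_1(t \cdot c)$, the contribution of all such points equals $(q - s') + \sum_{t \in \Fpx} S_t$ with $S_t := \sum_{x_0 \in \Fq \setminus \mathcal{P}} \psi_1\big( t \cdot R(x_0) \big)$, where $s' = \abs{\mathcal{P} \cap \Fq}$ and $S_1$ is exactly the sum to be estimated. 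Over each $\Fq$-rational pole of $R$ the cover $\pi$ is totally ramified and contributes a single rational point ($s'$ in total), while over $x = \infty$ it contributes one rational point when $\degree{F} > \degree{G}$ (a pole of $R$) and $p \cdot \mathbf{1}\!\big[ \Tr_{\Fq / \Fp}(R(\infty)) = 0 \big]$ rational points when $\degree{F} \leq \degree{G}$, where $R(\infty) \in \Fq$ denotes the value at infinity. Collecting terms, the dependence on $s'$ cancels and $\# \widetilde{C}(\Fq) = (q + 1) + \sum_{t \in \Fpx} \big( S_t + \eta_t \big)$, where $\eta_t = 0$ if $\degree{F} > \degree{G}$ and $\eta_t = \psi_1\big( t \cdot R(\infty) \big)$, of absolute value $1$, if $\degree{F} \leq \degree{G}$; this $\eta_t$ is the origin of the term $\delta$. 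Weil's theorem then gives $\big| \# \widetilde{C}(\Fq) - (q + 1) \big| \leq 2 g \cdot q^{1/2}$, with $g$ the genus of $\widetilde{C}$.

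To bound the single sum $S_1$ rather than the aggregate $\sum_t S_t$, I would factor the zeta function of $\widetilde{C}$ along the $\mathbb{Z} / p$-action $y \mapsto y + c$ ($c \in \Fp$), which is defined over $\Fq$: each of the $p - 1$ non-trivial characters of $\mathbb{Z} / p$ yields an $L$-polynomial, all $p - 1$ of which have the same degree $2 g / (p - 1)$ (the ramification of $\pi$ is ``all or nothing'' at each place, so all non-trivial twists share a conductor), all of whose reciprocal roots have absolute value $q^{1/2}$ (they divide the numerator of the zeta function of $\widetilde{C}$, to which Weil's theorem applies), and $S_t + \eta_t$ is, up to sign, the sum of the reciprocal roots of the $t$-th factor; hence $\big| S_1 \big| \leq \frac{2 g}{p - 1} \cdot q^{1/2} + \delta$. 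It then remains to compute $g$, which I would do via the Riemann--Hurwitz formula for the Artin--Schreier cover, $2 g - 2 = - 2 p + (p - 1) \sum_P (m_P + 1)$, where $P$ runs over the places of $\mathbb{P}^1$ at which $\pi$ ramifies and $m_P$ is the pole order of $R$ at $P$ after $R$ has been adjusted modulo $\big\{ h^p - h \big\}$ to make $m_P$ coprime to $p$. Since the reduced finite pole orders sum to at most $\degree{G}$ and there are at most $s$ of them, while the reduced pole order at $\infty$ (present only when $\degree{F} > \degree{G}$) is at most $\degree{F} - \degree{G}$, one obtains $\frac{2 g}{p - 1} \leq \degree{G} + s - 2$ if $\degree{F} \leq \degree{G}$ and $\frac{2 g}{p - 1} \leq \degree{F} + s - 1 = \degree{F} + (s + 1) - 2$ if $\degree{F} > \degree{G}$; substituting, and recalling the definitions of $s^{\ast}$ and $\delta$, yields exactly the asserted bound.

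The fibrewise point count and the Riemann--Hurwitz bookkeeping are routine. The two steps where genuine care is needed, and which I expect to be the main obstacles, are: first, the input that the $L$-polynomials are pure, i.e.\ that all their reciprocal roots have absolute value exactly $q^{1/2}$ --- this is Weil's Riemann Hypothesis for the curve $\widetilde{C}$, and it is precisely here that irreducibility of $\widetilde{C}$, hence the hypothesis $R \neq h^p - h$, is used; and second, the wild-ramification bookkeeping inside the genus formula --- when $p$ divides a pole order of $R$ one must first subtract a suitable $h^p - h$ to lower it to an order coprime to $p$ (which may even turn a ramified place unramified), and it is this reduction, together with bounding rather than computing the resulting orders, that leads to the mildly lossy constants $s^{\ast} \in \{ s, s + 1 \}$ and $\delta \in \{ 0, 1 \}$ instead of sharper ones.
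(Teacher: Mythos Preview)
The paper does not prove this theorem: it is quoted verbatim from Moreno \& Moreno and cited as \cite[Theorem~2]{Moreno-Weil-I}, with no proof given. The only comment the paper makes is that ``the generalized Weil bound \cite[Theorem~2]{Moreno-Weil-I} is proven via the theory of curves over prime fields \cite[\S 5]{Moreno-Weil-I}.'' So there is nothing in the paper to compare your proposal against.

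That said, your outline is the standard Artin--Schreier argument and is, in spirit, exactly what Moreno \& Moreno do: pass to the cover $y^p - y = R(x)$, factor the zeta function along the $\Z/p$-action to isolate the individual character sum, appeal to Weil's Riemann Hypothesis for curves to get purity of the reciprocal roots, and bound the genus via Riemann--Hurwitz with the Artin--Schreier conductor formula. One small wrinkle worth tightening: your reduction to $\psi = \psi_1$ via $R \mapsto b R$ is justified only when $b \in \Fp$, which you flag (``over a prime field''); for general $\Fq$ you should instead argue that the hypothesis $R \neq h^p - h$ is equivalent to $R$ having, at some place, a pole whose reduced order is coprime to $p$, a condition manifestly invariant under scaling by any $b \in \Fqx$ --- or simply run the whole argument with $\psi$ in place of $\psi_1$, since the fibrewise count and the $L$-factorisation work for any non-trivial additive character.
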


    One of the simplest non-trivial examples for character sums with rational arguments are so-called \emph{Kloosterman sums} \cite{Kloosterman-Sum}.
    \begin{defn}
        Let $\Fq$ be a finite field, let $\psi: \Fq \to \C$ be a non-trivial additive character, and let $a, b \in \Fq$.
        \begin{enumerate}
            \item Then the sum
            \begin{equation*}
                K (\psi, a, b) = \sum_{x \in \Fqx} \psi \left( a \cdot x + b \cdot x^{-1} \right)
            \end{equation*}
            is called Kloosterman sum.

            \item Let $X \subset \Fqx$ be non-empty, then the sum
            \begin{equation*}
                K (\psi, X, a, b) = \sum_{x \in X} \psi \left( a \cdot x + b \cdot x^{-1} \right)
            \end{equation*}
            is called Kloosterman sum over $X$.
        \end{enumerate}
    \end{defn}

    It is easy to see that \Cref{Th: Weil bound for rational functions} implies that $\abs{K (\psi, a, b)} \leq 2 \cdot q^\frac{1}{2}$, when $a \cdot b \neq 0$.
    However, we note that this estimation can also be proven with elementary techniques, see for example \cite[5.45.~Theorem]{Lidl-FiniteFields}.

    \section{Walsh Spectrum of Power Residue S-Boxes}
    We now estimate the Walsh spectrum of power residue S-Boxes $(d, m, T)$ with
    \begin{enumerate}[label=(\roman*)]
        \item $d = q - 2$ (\Cref{Sec: inverse power residue S-Box}),

        \item\label{Item: power permutation} $d > 1$, $d \cdot m < q$ and $\gcd \left( d, q \right) = 1$ (\Cref{Sec: power permutation resiude S-Box}), and

        \item $d = 1$ (\Cref{Sec: linear resiude S-Box}).
    \end{enumerate}
    The estimations are applications of the Weil bounds (\Cref{Th: Weil bound,Th: Weil bound for rational functions}).

    With the estimations for \ref{Item: power permutation} we can also derive the Walsh spectrum of a Legendre symbol-based S-Box (\Cref{Sec: Legendre symbol-based S-Box}) introduced by Grassi et al.\ \cite[\S 4.1]{ToSC:GKRS22}.

    \subsection{\texorpdfstring{$\mathbf{d = q - 2}$}{d = q - 2}}\label{Sec: inverse power residue S-Box}
    For a subgroup $\mathcal{G} \subset \Fqx$ the Kloosterman sum $K (\psi, \mathcal{G}, a, b)$ can be rewritten as an exponential sum over $\Fqx$ with rational arguments.
    \begin{lem}\label[lem]{Lem: Kloosterman sum over subgroup}
        Let $\Fq$ be a finite field, let $\mathcal{G} \subset \Fqx$ be a subgroup, let $\psi: \Fq \to \C$ be a non-trivial additive character, and let $a, b \in \Fq$.
        Then
        \begin{equation*}
            K (\psi, \mathcal{G}, a, b) = \frac{\abs{\mathcal{G}}}{q - 1} \cdot \sum_{x \in \Fqx} \psi \left( a \cdot x^\frac{q - 1}{\abs{\mathcal{G}}} + b \cdot x^{-\frac{q - 1}{\abs{\mathcal{G}}}} \right).
        \end{equation*}
    \end{lem}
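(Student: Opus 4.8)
The plan is to exploit the structure of the subgroup $\mathcal{G} \subset \Fqx$ together with the surjectivity of the power map $x \mapsto x^{\frac{q-1}{\abs{\mathcal{G}}}}$ from $\Fqx$ onto $\mathcal{G}$. Write $k = \frac{q-1}{\abs{\mathcal{G}}}$, so that $\abs{\mathcal{G}} = \frac{q-1}{k}$. Since $\Fqx$ is cyclic of order $q-1$ and $k \mid (q-1)$, the $k$\textsuperscript{th} power map $\pi\colon \Fqx \to \Fqx$, $\pi(x) = x^k$, has image exactly the unique subgroup of order $\frac{q-1}{k}$, which is $\mathcal{G}$, and every element of $\mathcal{G}$ has exactly $k$ preimages under $\pi$. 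This is the same counting argument already carried out in \Cref{Sec: Power Residue S-Boxes} for the power residue symbol.

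The key step is then a change-of-variables in the sum over $\Fqx$. For any fixed $y \in \mathcal{G}$, the fibre $\pi^{-1}(y) = \{ x \in \Fqx : x^k = y \}$ has cardinality $k$, and on this fibre the summand $\psi\!\left( a \cdot x^k + b \cdot x^{-k} \right)$ is constant and equal to $\psi\!\left( a \cdot y + b \cdot y^{-1} \right)$ (using that $x^{-k} = (x^k)^{-1} = y^{-1}$, which is legitimate since $x \in \Fqx$). Therefore
\begin{equation*}
    \sum_{x \in \Fqx} \psi\!\left( a \cdot x^k + b \cdot x^{-k} \right)
    = \sum_{y \in \mathcal{G}} \sum_{x \in \pi^{-1}(y)} \psi\!\left( a \cdot y + b \cdot y^{-1} \right)
    = k \cdot \sum_{y \in \mathcal{G}} \psi\!\left( a \cdot y + b \cdot y^{-1} \right)
    = k \cdot K(\psi, \mathcal{G}, a, b).
\end{equation*}
Dividing both sides by $k = \frac{q-1}{\abs{\mathcal{G}}}$, i.e.\ multiplying by $\frac{\abs{\mathcal{G}}}{q-1}$, yields the claimed identity.

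I do not expect any real obstacle here; the only thing to be careful about is to state precisely why the power map has uniform fibres over its image (invoke cyclicity of $\Fqx$ and the structure theorem for subgroups of cyclic groups, exactly as cited earlier via \cite[1.15.~Theorem]{Lidl-FiniteFields}), and to note that the exponents $\pm\frac{q-1}{\abs{\mathcal{G}}}$ in the statement are integers because $\abs{\mathcal{G}} \mid (q-1)$ by Lagrange's theorem, so the rational-argument sum on the right-hand side is well-defined (it is a genuine sum over $\Fqx$, with no pole issues since $0$ is excluded). This identity is what lets us later feed the right-hand side into \Cref{Th: Weil bound for rational functions} applied to $R(x) = a \cdot x^{k} + b \cdot x^{-k}$ to bound $\abs{K(\psi, \mathcal{G}, a, b)}$.
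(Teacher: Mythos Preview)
Your proposal is correct and follows essentially the same approach as the paper: both use that the power map $x \mapsto x^{(q-1)/\abs{\mathcal{G}}}$ is a $\frac{q-1}{\abs{\mathcal{G}}}$-to-one surjection from $\Fqx$ onto $\mathcal{G}$ (as established in \Cref{Sec: Power Residue S-Boxes}), so the sum over $\Fqx$ counts each term of the Kloosterman sum exactly $\frac{q-1}{\abs{\mathcal{G}}}$ times. Your fibre decomposition is just a slightly more explicit rendering of the paper's two-line computation.
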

    \begin{proof}
        Recall from \Cref{Sec: Power Residue S-Boxes}, for the $\abs{\mathcal{G}}$\textsuperscript{th} power residue symbol $\legendre{x}{q}_{\abs{\mathcal{G}}} : x \mapsto x^\frac{q - 1}{\abs{\mathcal{G}}}$ we have
        \begin{enumerate}[label=(\roman*)]
            \item for all $x \in \Fqx$ that $\legendre{x}{q}_{\abs{\mathcal{G}}} \in \mathcal{G}$, and

            \item for all $y \in \mathcal{G}$ that $\abs{\legendre{x}{q}_{\abs{\mathcal{G}}}^{-1} (y)} = \frac{q - 1}{\abs{\mathcal{G}}}$.
        \end{enumerate}
        Therefore
        \begin{align*}
            \sum_{x \in \Fqx} \psi \Bigg( a \cdot \legendre{x}{q}_{\abs{\mathcal{G}}} + b \cdot \legendre{x}{q}_{\abs{\mathcal{G}}}^{-1} \Bigg)
            &= \sum_{x \in \Fqx} \psi \left( a \cdot x^\frac{q - 1}{\abs{\mathcal{G}}} + b \cdot x^{-\frac{q - 1}{\abs{\mathcal{G}}}} \right) \\
            &= \frac{q - 1}{\abs{\mathcal{G}}} \cdot \sum_{x \in \mathcal{G}} \psi \left( a \cdot x + b \cdot x^{-1} \right). \qedhere
        \end{align*}
    \end{proof}

    Next we answer \cite[Conjecture~1]{EC:HHLPS25} in the affirmative.
    \begin{thm}\label[thm]{Th: Kloosterman spectrum over subgroup}
        Let $\Fq$ be a finite field, let $\mathcal{G} \subset \Fqx$ be a subgroup, let $\psi: \Fq \to \C$ be a non-trivial additive character, and let $a, b \in \Fq$.
        Then
        \begin{equation*}
            \abs{K (\psi, \mathcal{G}, a, b)} \leq
            \begin{dcases}
                \abs{\mathcal{G}}, & a, b = 0, \\
                \frac{\abs{\mathcal{G}}}{q - 1} + \left( 1 - \frac{\abs{\mathcal{G}}}{q - 1} \right) \cdot q^\frac{1}{2}, &
                \left\{
                \begin{array}{c}
                    a \neq 0, b = 0, \\
                    a = 0, b \neq 0
                \end{array}
                \right\}, \\
                2 \cdot q^\frac{1}{2}, & a \cdot b \neq 0.
            \end{dcases}
        \end{equation*}
    \end{thm}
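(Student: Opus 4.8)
The plan is to reduce the whole statement to \Cref{Lem: Kloosterman sum over subgroup}. Write $t = \frac{q - 1}{\abs{\mathcal{G}}}$, which by Lagrange is a positive integer dividing $q - 1$; setting $p = \cha \left( \Fq \right)$, the divisibility $t \mid (q - 1)$ forces $\gcd \left( t, q \right) = 1$ and also $\gcd \left( t, p \right) = 1$ (since $q - 1 \equiv -1 \pmod p$). By \Cref{Lem: Kloosterman sum over subgroup},
\begin{equation*}
    K (\psi, \mathcal{G}, a, b) = \frac{1}{t} \cdot \sum_{x \in \Fqx} \psi \left( a \cdot x^{t} + b \cdot x^{-t} \right) ,
\end{equation*}
so it suffices to bound this inner sum in each of the three regimes for $(a, b)$.

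The case $a = b = 0$ is immediate from the definition: $K (\psi, \mathcal{G}, 0, 0) = \abs{\mathcal{G}}$. For the case that exactly one of $a, b$ vanishes, I would first observe that $x \mapsto x^{-1}$ is a bijection of the group $\mathcal{G}$, so $K (\psi, \mathcal{G}, a, b) = K (\psi, \mathcal{G}, b, a)$ and one may assume $a \neq 0$, $b = 0$. Then
\begin{equation*}
    K (\psi, \mathcal{G}, a, 0) = \frac{1}{t} \cdot \sum_{x \in \Fqx} \psi \left( a \cdot x^{t} \right) = \frac{1}{t} \left( \sum_{x \in \Fq} \psi \left( a \cdot x^{t} \right) - 1 \right) ,
\end{equation*}
and since $\gcd \left( t, q \right) = 1$, \Cref{Th: Weil bound} applied to $f (x) = a \cdot x^{t}$ gives $\abs{\sum_{x \in \Fq} \psi ( a \cdot x^{t} )} \leq (t - 1) \cdot q^\frac{1}{2}$. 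Hence $\abs{K (\psi, \mathcal{G}, a, 0)} \leq \frac{1}{t} + \frac{t - 1}{t} \cdot q^\frac{1}{2}$, and substituting $\frac{1}{t} = \frac{\abs{\mathcal{G}}}{q - 1}$ (so $\frac{t-1}{t} = 1 - \frac{\abs{\mathcal{G}}}{q-1}$) reproduces exactly the middle bound.

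For the case $a \cdot b \neq 0$, I would view the rational argument as $R (x) = a \cdot x^{t} + b \cdot x^{-t} = \frac{a \cdot x^{2 t} + b}{x^{t}} \in \Fq (x)$, with numerator of degree $2 t$, denominator of degree $t$, and a single pole $x = 0$ over $\overline{\Fq}$; thus in the notation of \Cref{Th: Weil bound for rational functions} one has $s = 1$, and because $2 t > t$ also $s^\ast = 2$ and $\delta = 0$. The step requiring genuine care is the hypothesis $R (x) \neq h (x)^{p} - h (x)$ for all $h \in \overline{\Fq} (x)$: using $a, b \neq 0$, the function $R$ has a pole of order exactly $t$ at both $0$ and $\infty$, whereas every pole order of a function of the shape $h^{p} - h$ is divisible by $p$; since $p \nmid t$ this cannot happen, so the hypothesis holds. \Cref{Th: Weil bound for rational functions} then yields $\abs{\sum_{x \in \Fqx} \psi ( R (x) )} \leq (2 t + 2 - 2) \cdot q^\frac{1}{2} = 2 t \cdot q^\frac{1}{2}$, and dividing by $t$ gives $\abs{K (\psi, \mathcal{G}, a, b)} \leq 2 \cdot q^\frac{1}{2}$.

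The only non-routine ingredient is the verification of the pole-order hypothesis of \Cref{Th: Weil bound for rational functions} in the last case, which rests entirely on the fact that $t \mid (q - 1)$ is coprime to $p$; everything else is bookkeeping once \Cref{Lem: Kloosterman sum over subgroup} and the two Weil bounds are in hand.
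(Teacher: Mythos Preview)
Your proof is correct and follows essentially the same route as the paper: reduce via \Cref{Lem: Kloosterman sum over subgroup}, handle the single-nonzero case with \Cref{Th: Weil bound}, and the $a b \neq 0$ case with \Cref{Th: Weil bound for rational functions}. The only (cosmetic) difference is in verifying $R \neq h^{p} - h$: the paper equates degrees of both sides to force $p \mid t$, whereas you observe directly that every pole order of $h^{p} - h$ is a multiple of $p$ while $R$ has a pole of order $t$ with $p \nmid t$; both arguments hinge on $\gcd \left( t, p \right) = 1$.
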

    \begin{proof}
        The case $a, b = 0$ is trivial.
        For $a \neq 0$, $b = 0$ or $a = 0$, $b \neq 0$, first note that due to $\mathcal{G}$ being a group, we have
        \begin{equation*}
            \abs{K (\psi, \mathcal{G}, 0, b)}
            = \sum_{x \in \mathcal{G}} \psi \left( b \cdot x^{-1} \right)
            = \sum_{x \in \mathcal{G}} \psi \left( b \cdot x \right)
            = \abs{K (\psi, \mathcal{G}, b, 0)}.
        \end{equation*}
        Then, by application of \Cref{Lem: Kloosterman sum over subgroup} and the Weil bound (\Cref{Th: Weil bound})
        \begin{align*}
            \abs{K (\psi, \mathcal{G}, b, 0)} = \abs{K (\psi, \mathcal{G}, 0, b)}
            &= \frac{\abs{\mathcal{G}}}{q - 1} \cdot \abs{\sum_{x \in \Fqx} \psi \left( b \cdot x^\frac{q - 1}{\abs{\mathcal{G}}} \right)} \\
            &= \frac{\abs{\mathcal{G}}}{q - 1} \cdot \abs{-1 + \sum_{x \in \Fq} \psi \left( b \cdot x^\frac{q - 1}{\abs{\mathcal{G}}} \right)} \\
            & \leq \frac{\abs{\mathcal{G}}}{q - 1} \cdot \left( 1 + \abs{\sum_{x \in \Fq} \psi \left( b \cdot x^\frac{q - 1}{\abs{\mathcal{G}}} \right)} \right) \\
            &\leq \frac{\abs{\mathcal{G}}}{q - 1} \cdot \left( 1 +  \left( \frac{q - 1}{\abs{\mathcal{G}}} - 1 \right) \cdot q^\frac{1}{2} \right).
        \end{align*}

        For $a \cdot b \neq 0$, we will apply \Cref{Lem: Kloosterman sum over subgroup} followed by \Cref{Th: Weil bound for rational functions}.
        To verify the conditions of \Cref{Th: Weil bound for rational functions}, let us assume that
        \begin{align}
            a \cdot x^\frac{q - 1}{\abs{\mathcal{G}}} + b \cdot x^{-\frac{q - 1}{\abs{\mathcal{G}}}}
            &= \left( \frac{F (x)}{G (x)} \right)^p - \frac{F (x)}{G (x)} \\
            \Longleftrightarrow \frac{a \cdot x^{2 \cdot \frac{q - 1}{\abs{\mathcal{G}}}} + b}{x^\frac{q - 1}{\abs{\mathcal{G}}}}
            &= \frac{F (x) \cdot \left( F (x)^{p - 1} - G (x)^{p - 1} \right)}{G (x)^p}, \label{Equ: rational function}
        \end{align}
        where $p$ denotes the characteristic of $\Fq$ and $F, G \in \overline{\Fq} [x]$.
        The rational function on the left-hand side of \Cref{Equ: rational function} is non-zero and has degree $\frac{q - 1}{\abs{\mathcal{G}}}$, so by \Cref{Lem: degree rational function} we must have that $\degree{F} > \degree{G}$.\footnote{
            If $\degree{F} \leq \degree{G}$, then the rational function on the right-hand side of \Cref{Equ: rational function} has degree $\leq 0$.
            }
        Therefore, we yield the degree equality of rational functions
        \begin{equation*}
            \frac{q - 1}{\abs{\mathcal{G}}}
            = \degree{F} + (p - 1) \cdot \degree{F} - p \cdot \degree{G}
            = p \cdot \big( \degree{F} - \degree{G} \big).
        \end{equation*}
        Hence, we have that $p \mid \frac{q - 1}{\abs{\mathcal{G}}}$ which contradicts $\gcd \left( p, q - 1 \right) = 1$.
        So, the asserted rational function from \Cref{Equ: rational function} cannot exist, and we can apply \Cref{Th: Weil bound for rational functions}.
        Then
        \begin{align*}
            \abs{K (\psi, \mathcal{G}, a, b)}
            &= \frac{\abs{\mathcal{G}}}{q - 1} \cdot \abs{\sum_{x \in \Fqx} \psi \left( a \cdot x^\frac{q - 1}{\abs{\mathcal{G}}} + b \cdot x^{-\frac{q - 1}{\abs{\mathcal{G}}}} \right)} \\
            &= \frac{\abs{\mathcal{G}}}{q - 1} \cdot \abs{\sum_{x \in \Fqx} \psi \left( \frac{a \cdot x^{2 \cdot \frac{q - 1}{\abs{\mathcal{G}}}} + b}{x^\frac{q - 1}{\abs{\mathcal{G}}}} \right)} \\
            &\leq \frac{\abs{\mathcal{G}}}{q - 1} \cdot \left( 2 \cdot \frac{q - 1}{\abs{\mathcal{G}}} + 2 - 2 \right) \cdot q^\frac{1}{2} + 0
            = 2 \cdot q^\frac{1}{2}. \qedhere
        \end{align*}
    \end{proof}

    As corollary, we yield the Walsh spectrum of power residue S-Boxes.
    \begin{cor}\label[cor]{Cor: Walsh spectrum power residue S-Box inverse}
        Let $\Fq$ be a finite field, let $m \in \Z_{> 1}$ be such that $m \mid (q - 1)$, and let $S: \Fq \to \Fq$ be the power residue S-Box of $(q - 2, m, T)$.
        Let $\psi: \Fq \to \C$ be a non-trivial additive character, and let $a, b \in \Fq$.
        Then
        \begin{equation*}
            \abs{\mathcal{W}_S (\psi, a, b)} \leq
            \begin{dcases}
                q, & a = b = 0, \\
                0, & a \neq 0, b = 0, \\
                \left( m - 1 \right) \cdot q^\frac{1}{2} + 2, & a = 0, b \neq 0, \\
                2 \cdot m \cdot q^\frac{1}{2} + 1, & a \cdot b \neq 0.
            \end{dcases}
        \end{equation*}
    \end{cor}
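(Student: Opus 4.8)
The plan is to peel off the summand at $x = 0$, rewrite the remaining sum over $\Fqx$ as a sum of $m$ Kloosterman sums over the order-$\frac{q - 1}{m}$ subgroup $H = \left\{ x \in \Fqx \mid x^\frac{q - 1}{m} = 1 \right\}$, and then invoke \Cref{Th: Kloosterman spectrum over subgroup} termwise. Since $\gcd (q - 2, q) = 1$ and $m > 1$ forces $q \geq 3$, we have $S (0) = 0^{q - 2} \cdot T (0) = 0$, so the $x = 0$ term of the Walsh transform (\Cref{Def: Walsh transform}) always contributes $\psi (0) = 1$. For $a = b = 0$ this gives $\mathcal{W}_S (\psi, 0, 0) = q$, and for $a \neq 0$, $b = 0$ it gives $\mathcal{W}_S (\psi, a, 0) = \sum_{x \in \Fq} \psi (a \cdot x) = 0$ because $\psi$ is a non-trivial additive character. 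That disposes of the first two cases.

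For the remaining two cases I would use $x^{q - 2} = x^{-1}$ on $\Fqx$ to write
\begin{equation*}
    \mathcal{W}_S (\psi, a, b) = 1 + \sum_{x \in \Fqx} \psi \bigg( a \cdot x + b \cdot x^{-1} \cdot T \Big( \legendre{x}{q}_m \Big) \bigg).
\end{equation*}
By \Cref{Sec: Power Residue S-Boxes} the map $\legendre{x}{q}_m : x \mapsto x^\frac{q - 1}{m}$ surjects $\Fqx$ onto the subgroup $\mathcal{G} \subset \Fqx$ of order $m$, and its fibres $\legendre{x}{q}_m^{-1} (y)$ are precisely the cosets of $H$, each of cardinality $\frac{q - 1}{m}$. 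Fixing for each $y \in \mathcal{G}$ a preimage $x_y$ with $x_y^\frac{q - 1}{m} = y$, the value $T \big( \legendre{x}{q}_m \big)$ is constant equal to $T (y)$ on the coset $x_y \cdot H$, so reparametrizing the inner sum over this coset via $x = x_y \cdot h$, $h \in H$, yields
\begin{equation*}
    \sum_{x \in \Fqx} \psi \bigg( a \cdot x + b \cdot x^{-1} \cdot T \Big( \legendre{x}{q}_m \Big) \bigg)
    = \sum_{y \in \mathcal{G}} \sum_{h \in H} \psi \big( (a \cdot x_y) \cdot h + (b \cdot x_y^{-1} \cdot T (y)) \cdot h^{-1} \big)
    = \sum_{y \in \mathcal{G}} K \big( \psi, H, a \cdot x_y, b \cdot x_y^{-1} \cdot T (y) \big).
\end{equation*}

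It then remains to bound each of these $m$ Kloosterman sums over $H$ with \Cref{Th: Kloosterman spectrum over subgroup}, using $\frac{\abs{H}}{q - 1} = \frac{1}{m}$. The non-vanishing condition $0 \notin \imag \big( T \circ \legendre{x}{q}_m \big)$ of \Cref{Def: power residue S-Box} is what makes this work: it forces $b \cdot x_y^{-1} \cdot T (y) \neq 0$ whenever $b \neq 0$ (and trivially $a \cdot x_y \neq 0$ whenever $a \neq 0$). Hence for $a \cdot b \neq 0$ every term is at most $2 \cdot q^\frac{1}{2}$, giving $\abs{\mathcal{W}_S (\psi, a, b)} \leq 1 + 2 \cdot m \cdot q^\frac{1}{2}$; and for $a = 0$, $b \neq 0$ every term is at most $\frac{1}{m} + \big( 1 - \frac{1}{m} \big) \cdot q^\frac{1}{2}$, so summing over the $m$ cosets and restoring the $x = 0$ contribution gives $\abs{\mathcal{W}_S (\psi, 0, b)} \leq 1 + \big( 1 + (m - 1) \cdot q^\frac{1}{2} \big) = (m - 1) \cdot q^\frac{1}{2} + 2$. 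I do not expect a genuine obstacle beyond careful bookkeeping of the coset decomposition; the single point that must not be glossed over is exactly the non-vanishing of $T$ on $\mathcal{G}$, since that is what licenses applying the $a \cdot b \neq 0$ branch of \Cref{Th: Kloosterman spectrum over subgroup} to each fibre.
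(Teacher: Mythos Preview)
Your proof is correct and follows essentially the same route as the paper's: decompose $\Fqx$ into the $m$ cosets of the order-$\frac{q-1}{m}$ subgroup (the paper indexes them as $N_r = g^r N_0$ via a fixed generator, you index them as fibres $x_y H$ of the power residue map), observe that $T\big(\legendre{x}{q}_m\big)$ is constant on each coset, shift each coset sum back to the subgroup to obtain a Kloosterman sum $K(\psi, H, a_r, b_r)$, and then apply \Cref{Th: Kloosterman spectrum over subgroup} termwise using the non-vanishing of $T$ on $\mathcal{G}$. The bookkeeping and the final arithmetic in each case match the paper's exactly.
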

    \begin{proof}
        We recapitulate the reduction of $\abs{\mathcal{W}_S (\psi, a, b)}$ to a Kloosterman sum from \cite[\S 5.1]{EC:HHLPS25}.
        Let $g \in \Fqx$ be a generator, and for $0 \leq r \leq m - 1$ let $N_r = \big\{ g^r, g^{m + r}, g^{2 \cdot m + r}, \ldots, \allowbreak g^{q - 1 - m + r} \big\}$.
        Then
        \begin{align*}
            \mathcal{W}_S (\psi, a, b)
            &= \sum_{x \in \Fq} \psi \big( a \cdot x + b \cdot S (x) \big) \\
            &= 1 + \sum_{r = 0}^{m - 1} \sum_{x \in N_r} \psi \big( a \cdot x + b \cdot S (x) \big) \\
            &= 1 + \sum_{r = 0}^{m - 1} \sum_{x \in N_r} \psi \Bigg( a \cdot x + b \cdot x^{-1} \cdot T \Bigg( \legendre{x}{q}_m \Bigg) \Bigg) \\
            &= 1 + \sum_{r = 0}^{m - 1} \sum_{x \in N_r} \psi \Bigg( a \cdot x + b \cdot x^{-1} \cdot T \bigg( g^{\left( k \cdot m + r \right) \cdot \frac{q - 1}{m}} \bigg) \Bigg) \\
            &= 1 + \sum_{r = 0}^{m - 1} \sum_{x \in N_r} \psi \Bigg( a \cdot x + b \cdot x^{-1} \cdot T \bigg( g^{r \cdot \frac{q - 1}{m}} \bigg) \Bigg) \\
            &= 1 + \sum_{r = 0}^{m - 1} \sum_{x \in N_0} \psi \Bigg( a \cdot x \cdot g^r + b \cdot x^{-1} \cdot g^{-r} \cdot T \bigg( g^{r \cdot \frac{q - 1}{m}} \bigg) \Bigg).
        \end{align*}
        (We note that Ha et al.\ forgot the term $1$, which corresponds to $\psi (0)$, in their computation.)
        For $r$ fixed, we can consider the terms $g^r$ and $g^{-r} \cdot T \left( g^{r \cdot \frac{q - 1}{m}} \right)$ as non-zero constants.\footnote{Recall from \Cref{Def: power residue S-Box} that $0 \notin \imag \bigg( T \circ \legendre{x}{q}_m \bigg)$.}
        Therefore, when passing to the absolute value we yield with the triangular inequality that
        \begin{equation*}
            \abs{\mathcal{W}_S (\psi, a, b)}
            \leq 1 + \sum_{r = 0}^{m - 1} \abs{\sum_{x \in N_0} \psi \left( a_r \cdot x + b_r \cdot x^{-1} \right)}
            = 1 + \sum_{r = 0}^{m - 1} \abs{K (\psi, N_0, a_r, b_r)},
        \end{equation*}
        where $a_r = a \cdot g^r$ and $b_r = b \cdot g^{-r} \cdot T \left( g^{r \cdot \frac{q - 1}{m}} \right)$.
        On the other hand, the set $N_0$ is also a group, namely $N_0 = \left< g^m \right> \subset \Fqx$.
        So, we can apply \Cref{Th: Kloosterman spectrum over subgroup} to estimate $\abs{K (\psi, N_0, a_r, b_r)}$.

        Now we do a case distinction on $a, b \in \Fq$.
        \begin{itemize}
            \item The case $a = b = 0$ is trivial.

            \item The case $a \neq 0$, $b = 0$ is well-known \cite[5.31.~Theorem]{Lidl-FiniteFields}.

            \item The case $a = 0$, $b \neq 0$ follows with \Cref{Th: Kloosterman spectrum over subgroup}
            \begin{equation*}
                \abs{K (\psi, N_0, 0, b_r)}
                \leq \frac{1}{m} + \left( 1 - \frac{1}{m} \right) \cdot q^\frac{1}{2}.
            \end{equation*}

            \item The case $a \cdot b \neq 0$ also follows with \Cref{Th: Kloosterman spectrum over subgroup}
            \begin{equation*}
                \abs{K (\psi, N_0, a_r, b_r)}
                \leq 2 \cdot q^\frac{1}{2}. \qedhere
            \end{equation*}
        \end{itemize}
    \end{proof}
    \begin{rem}
        If $S$ is a permutation, then the case $a = 0$, $b \neq 0$ improves to $\mathcal{W}_S (\psi, 0, b) = 0$.
    \end{rem}

    \begin{ex}[\Polocolo]\label[ex]{Ex: Polocolo}
        Let $\Fq$ be a prime field such that $2^n \mid (q - 1)$, and let $S: \Fq \to \Fq$ be a $\big( q - 2, 2^n, T \big)$ power residue S-Box which also induces a permutation on $\Fq$.
        I.e., $S$ is a \Polocolo{} S-Box \cite[\S 3]{EC:HHLPS25}.
        Then the Walsh spectrum of $S$ is bounded by
        \begin{equation*}
            \abs{\mathcal{W}_S (\psi, a, b)} \leq
            \begin{dcases}
                q, & a = b = 0, \\
                0, &
                \left\{
                \begin{array}{c}
                    a \neq 0, b = 0, \\
                    a = 0, b \neq 0
                \end{array}
                \right\}, \\
                2^{n + 1} \cdot q^\frac{1}{2} + 1, & a \cdot b \neq 0.
            \end{dcases}
        \end{equation*}
    \end{ex}

    The \Polocolo{} designers \cite[\S 5.1]{EC:HHLPS25} conjectured for any non-trivial additive characters $\psi, \phi: \Fq \to \C$ that
    \begin{equation}
        \max_{a, b \in \Fqx} \abs{\CORR_S (\psi, \phi, a, b)} \leq \frac{2^{n + 2}}{q^\frac{1}{2}}.
    \end{equation}
    We note that this inequality follows trivially from \Cref{Ex: Polocolo}.

    \subsubsection{\texorpdfstring{$\mathbf{d = e \cdot (q - 2)}$}{d = e * (q - 2)}}
    It is possible to generalize the previous arguments to $(e \cdot (q - 2), m, T)$ power residue S-Boxes.
    Since the arguments are almost identical to the special case $e = 1$, we only sketch the full proof.
    \begin{cor}
        Let $\Fq$ be a finite field, let $e, m \in \Z_{> 1}$ be such that $m \mid (q - 1)$, and let $S: \Fq \to \Fq$ be the power residue S-Box of $(e \cdot (q - 2), m, T)$.
        Let $\psi: \Fq \to \C$ be a non-trivial additive character, let $a, b \in \Fq$, and let $\mathcal{G} \subset \Fqx$ be a subgroup.
        \begin{enumerate}
            \item $\sum_{x \in \mathcal{G}} \psi \left( a \cdot x + b \cdot x^{-e} \right)
            = \frac{\abs{\mathcal{G}}}{q - 1} \cdot \sum_{x \in \Fqx} \psi \left( a \cdot x^\frac{q - 1}{\abs{\mathcal{G}}} + b \cdot x^{-e \cdot \frac{q - 1}{\abs{\mathcal{G}}}} \right)$.

            \item $\abs{\sum_{x \in \mathcal{G}} \psi \left( a \cdot x + b \cdot x^{-e} \right)} \leq
            \begin{dcases}
                \abs{\mathcal{G}}, & a, b = 0, \\
                \frac{\abs{\mathcal{G}}}{q - 1} + \left( 1 - \frac{\abs{\mathcal{G}}}{q - 1} \right) \cdot q^\frac{1}{2}, & a \neq 0, b = 0, \\
                \frac{\abs{\mathcal{G}}}{q - 1} + \left( e - \frac{\abs{\mathcal{G}}}{q - 1} \right) \cdot q^\frac{1}{2}, & a = 0, b \neq 0, \\
                (e + 1) \cdot q^\frac{1}{2}, & a \cdot b \neq 0.
            \end{dcases}
            $

            \item $\mathcal{W}_S (\psi, a, b) \leq
            \begin{dcases}
                q, & a, b = 0, \\
                0, & a \neq 0, b = 0, \\
                (e \cdot m - 1) \cdot q^\frac{1}{2} + 2, & a = 0, b \neq 0, \\
                (e + 1) \cdot m \cdot q^\frac{1}{2}, & a \cdot b \neq 0.
            \end{dcases}
            $
        \end{enumerate}
    \end{cor}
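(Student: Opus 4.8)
The plan is to mirror, with the inverse $x^{-1}$ systematically replaced by $x^{-e}$, the three-step development already carried out for $e = 1$: the subgroup-to-$\Fqx$ rewriting of \Cref{Lem: Kloosterman sum over subgroup}, the Weil-bound estimate of \Cref{Th: Kloosterman spectrum over subgroup}, and the reduction of the Walsh spectrum in \Cref{Cor: Walsh spectrum power residue S-Box inverse}. For part~(1), I would recall from \Cref{Sec: Power Residue S-Boxes} that the $\abs{\mathcal{G}}$\textsuperscript{th} power residue $\legendre{x}{q}_{\abs{\mathcal{G}}} : x \mapsto x^\frac{q - 1}{\abs{\mathcal{G}}}$ maps $\Fqx$ onto $\mathcal{G}$ with every fibre of size $N := \frac{q - 1}{\abs{\mathcal{G}}}$, and then run the same fibre-counting as in the proof of \Cref{Lem: Kloosterman sum over subgroup}, using $\legendre{x}{q}_{\abs{\mathcal{G}}}^{-e} = \big( \legendre{x}{q}_{\abs{\mathcal{G}}} \big)^{-e} \in \mathcal{G}$, to obtain
\begin{equation*}
    \sum_{x \in \Fqx} \psi \left( a \cdot x^N + b \cdot x^{-e N} \right) = N \cdot \sum_{y \in \mathcal{G}} \psi \left( a \cdot y + b \cdot y^{-e} \right),
\end{equation*}
which rearranges to the asserted identity.

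For part~(2), the case $a = b = 0$ is immediate. For $a \neq 0$, $b = 0$ I would apply~(1) and peel off $x = 0$ to reduce to $\abs{\sum_{x \in \Fq} \psi ( a \cdot x^N )}$, which \Cref{Th: Weil bound} bounds by $\left( N - 1 \right) q^\frac{1}{2}$ (applicable since $N \mid q - 1$ forces $\gcd(N, q) = 1$); multiplying by $\frac{\abs{\mathcal{G}}}{q - 1}$ and accounting for the peeled-off $-1$ yields the stated bound, and the case $a = 0$, $b \neq 0$ is identical after the substitution $x \mapsto x^{-1}$, which turns $b \cdot x^{-e N}$ into the degree-$eN$ monomial $b \cdot x^{e N}$. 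For $a \cdot b \neq 0$ I would rewrite $a \cdot x^N + b \cdot x^{-e N} = \frac{a \cdot x^{(e + 1) N} + b}{x^{e N}}$ and, exactly as in the proof of \Cref{Th: Kloosterman spectrum over subgroup}, argue that if this rational function equalled $h^p - h$ for some $h \in \overline{\Fq}(x)$ then comparing degrees would force $p \mid N = \frac{q - 1}{\abs{\mathcal{G}}}$, contradicting $\gcd(p, q - 1) = 1$; hence \Cref{Th: Weil bound for rational functions} applies with $\max \{ \degree{F}, \degree{G} \} = (e + 1) N$, a single pole ($s = 1$), and $\degree{F} > \degree{G}$ (so $s^\ast = 2$, $\delta = 0$), giving the bound $(e + 1) N \cdot q^\frac{1}{2}$, and multiplying by $\frac{\abs{\mathcal{G}}}{q - 1} = \frac{1}{N}$ produces $(e + 1) \cdot q^\frac{1}{2}$.

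For part~(3), on $\Fqx$ we have $x^{e \cdot (q - 2)} = x^{-e}$ and $S(0) = 0$, so $\mathcal{W}_S (\psi, a, b) = 1 + \sum_{x \in \Fqx} \psi \big( a \cdot x + b \cdot x^{-e} \cdot T ( \legendre{x}{q}_m ) \big)$. I would partition $\Fqx$ into the cosets $N_r = g^r \cdot \langle g^m \rangle$ for $0 \leq r \leq m - 1$ (with $g$ a generator), on each of which $\legendre{x}{q}_m$ is the constant $g^{r \cdot \frac{q - 1}{m}}$, substitute $x = g^r \cdot y$ with $y \in N_0 = \langle g^m \rangle$, and set $a_r = a \cdot g^r$, $b_r = b \cdot g^{-r e} \cdot T ( g^{r \cdot \frac{q - 1}{m}} )$ — where $b_r \neq 0$ whenever $b \neq 0$ since $T$ never vanishes — to obtain $\mathcal{W}_S (\psi, a, b) = 1 + \sum_{r = 0}^{m - 1} \sum_{y \in N_0} \psi ( a_r \cdot y + b_r \cdot y^{-e} )$. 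Since $N_0$ is a subgroup of order $\frac{q - 1}{m}$, the triangle inequality together with part~(2) (with $\frac{\abs{\mathcal{G}}}{q - 1} = \frac{1}{m}$) bounds each inner sum; doing the case distinction according to whether $a, b$ vanish — the case $a \neq 0$, $b = 0$ reducing to orthogonality of characters — and summing $m$ such bounds plus the leading $1$ gives the claimed estimates.

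The main obstacle, as already for $e = 1$, will be verifying the hypothesis $R(x) \neq h(x)^p - h(x)$ of \Cref{Th: Weil bound for rational functions}; the degree bookkeeping is slightly heavier here because of the exponent $e$ — one must confirm that $\degree{F} = (e + 1) N > e N = \degree{G}$, so that the estimate stays in the $\delta = 0$ regime — but structurally the argument again collapses to the impossibility of $p \mid \frac{q - 1}{\abs{\mathcal{G}}}$. A minor point to keep an eye on is the case $a = 0$, $b \neq 0$, where the polynomial Weil bound is applied to a monomial of degree $e \cdot \frac{q - 1}{\abs{\mathcal{G}}}$ and hence implicitly uses $\gcd(e, q) = 1$.
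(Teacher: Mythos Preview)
Your proposal is correct and follows essentially the same approach as the paper's proof sketch: the same fibre-counting for~(1), the same Weil-bound applications and degree-contradiction argument for~(2), and the same coset decomposition for~(3). Your closing remark about the implicit hypothesis $\gcd(e, q) = 1$ needed to apply \Cref{Th: Weil bound} in the $a = 0$, $b \neq 0$ case is a valid observation that the paper does not make explicit.
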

    \begin{proof}[Proof (Sketch)]
        Claim (1) follows identical to \Cref{Lem: Kloosterman sum over subgroup}.

        For claim (2), we have to settle a new case namely $a = 0$, $b \neq 0$.
        Since $x \mapsto x^{-1}$ is a group homomorphism on $\mathcal{G}$, we have that
        \begin{equation*}
            \abs{\sum_{x \in \mathcal{G}} \psi \left( b \cdot x^{-e} \right)}
            = \abs{\sum_{y \in \mathcal{G}} \psi \left( b \cdot y^{e} \right)}
            = \frac{\abs{\mathcal{G}}}{q - 1} \cdot \abs{-1 + \sum_{y \in \Fq} \psi \left( b \cdot y^{e \cdot \frac{q - 1}{\mathcal{G}}} \right)}.
        \end{equation*}
        Now we apply the triangular inequality followed by the Weil bound (\Cref{Th: Weil bound}).
        For $a \cdot b \neq 0$, analogously to the proof of \Cref{Th: Kloosterman spectrum over subgroup}, we assume that there exists rational functions $F, G \in \overline{\Fq} (x)$ such that
        \begin{align*}
            a \cdot x^\frac{q - 1}{\abs{\mathcal{G}}} + b \cdot x^{-e \cdot \frac{q - 1}{\abs{\mathcal{G}}}}
            &= \left( \frac{F (x)}{G (x)} \right)^p - \frac{F (x)}{G (x)} \\
            \Longleftrightarrow \frac{a \cdot x^{(e + 1) \cdot \frac{q - 1}{\abs{\mathcal{G}}}} + b}{x^{e \cdot \frac{q - 1}{\abs{\mathcal{G}}}}}
            &= \frac{F (x) \cdot \left( F (x)^{p - 1} - G (x)^{p - 1} \right)}{G (x)^p},
        \end{align*}
        where $p$ is the characteristic of $\Fq$.
        Then, we can again establish the degree equality of rational functions $\frac{q - 1}{\abs{\mathcal{G}}} = p \cdot \big( \degree{F} - \degree{G} \big)$, which contradicts $\gcd \left( p, q - 1 \right) = 1$.
        So, we can apply \Cref{Th: Weil bound for rational functions} to obtain the estimation.

        Claim (3) follows identical to \Cref{Cor: Walsh spectrum power residue S-Box inverse}.
    \end{proof}
    \begin{rem}
        If $x \mapsto x^e$ induces a permutation monomial on $\Fq$, i.e.\ if $\gcd \left( e, q - 1 \right) = 1$, then $x \mapsto x^{e \cdot (q - 2)}$ is also a permutation monomial \cite[7.8.~Theorem]{Lidl-FiniteFields}.
        Now let $g \in \Fqx$ be a generator, and let $m \in \Z_{> 1}$ be such that $m \mid (q - 1)$.
        Then, $\left< g^m \right> \subset \Fqx$ is a subgroup of order $\frac{q - 1}{m}$.
        Obviously, $g^{e \cdot m} \in \left< g^m \right>$.
        In addition, $\left< g^{e \cdot m} \right> \subset \Fqx$ is a subgroup of order $\frac{q - 1}{\gcd \left( e \cdot m, q - 1 \right)} = \frac{q - 1}{m}$ \cite[1.15.~Theorem]{Lidl-FiniteFields}.
        I.e., a permutation monomial on $\Fq$ also induces a permutation on any subgroup $\mathcal{G} \subset \Fqx$.
        In this case the estimations for $a = 0$, $b \neq 0$ improve to the ones of $a \neq 0$, $b = 0$.
    \end{rem}

    \subsection{\texorpdfstring{$\mathbf{d > 1}$}{d > 1} and \texorpdfstring{$\mathbf{d \cdot m < q}$}{d * m < q}}\label{Sec: power permutation resiude S-Box}
    Next we investigate power residue S-Boxes with $d \neq q - 2$.
    First we state an analog of \Cref{Lem: Kloosterman sum over subgroup} for the Walsh spectrum.
    \begin{lem}\label[lem]{Lem: character sum over subgroup}
        Let $\Fq$ be a finite field, let $\mathcal{G} \subset \Fqx$ be a subgroup, let $\psi: \Fq \to \C$ be a non-trivial additive character, let $f \in \Fq [x]$, and let $a, b \in \Fq$.
        Then
        \begin{equation*}
            \mathcal{W}_f (\psi, \mathcal{G}, a, b) = \frac{\abs{\mathcal{G}}}{q - 1} \cdot \sum_{x \in \Fqx} \psi \Bigg( a \cdot \legendre{x}{q}_{\abs{\mathcal{G}}} + b \cdot f \Bigg( \legendre{x}{q}_{\abs{\mathcal{G}}} \Bigg) \Bigg).
        \end{equation*}
    \end{lem}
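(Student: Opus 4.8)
The plan is to reproduce the proof of \Cref{Lem: Kloosterman sum over subgroup} essentially verbatim, with the Laurent combination $a \cdot x + b \cdot x^{-1}$ replaced by the generic combination $a \cdot x + b \cdot f (x)$; the polynomial $f$ never enters the argument beyond being evaluated on $\mathcal{G}$. The single structural fact I would invoke is the one already recorded in \Cref{Sec: Power Residue S-Boxes}: the $\abs{\mathcal{G}}$\textsuperscript{th} power residue symbol $\legendre{x}{q}_{\abs{\mathcal{G}}}\colon x \mapsto x^{(q - 1)/\abs{\mathcal{G}}}$ maps $\Fqx$ onto $\mathcal{G}$, and every fiber $\legendre{x}{q}_{\abs{\mathcal{G}}}^{-1} (y)$ with $y \in \mathcal{G}$ has cardinality exactly $\frac{q - 1}{\abs{\mathcal{G}}}$.

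Granting this, I would partition the right-hand sum $\sum_{x \in \Fqx} \psi\big( a \cdot \legendre{x}{q}_{\abs{\mathcal{G}}} + b \cdot f (\legendre{x}{q}_{\abs{\mathcal{G}}}) \big)$ according to the common value $y = \legendre{x}{q}_{\abs{\mathcal{G}}} \in \mathcal{G}$. On each fiber the summand is the constant $\psi\big( a \cdot y + b \cdot f (y) \big)$, so that fiber contributes $\frac{q - 1}{\abs{\mathcal{G}}} \cdot \psi\big( a \cdot y + b \cdot f (y) \big)$; summing over $y \in \mathcal{G}$ gives $\frac{q - 1}{\abs{\mathcal{G}}} \sum_{y \in \mathcal{G}} \psi\big( a \cdot y + b \cdot f (y) \big) = \frac{q - 1}{\abs{\mathcal{G}}} \cdot \mathcal{W}_f (\psi, \mathcal{G}, a, b)$ by \Cref{Def: Walsh transform}. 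Multiplying both sides by $\frac{\abs{\mathcal{G}}}{q - 1}$ yields the claimed identity.

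I do not anticipate a genuine obstacle here: the whole content is a single fiber-counting step, mirroring \Cref{Lem: Kloosterman sum over subgroup}. The only point I would state with a little care is that $f$, although a priori an element of $\Fq [x]$, is only ever evaluated at elements of $\mathcal{G}$, so that no compatibility condition on $f$ is needed and its degree is irrelevant at this stage — the degree will matter only later, when the Weil bounds (\Cref{Th: Weil bound,Th: Weil bound for rational functions}) are applied to the resulting sum over $\Fqx$.
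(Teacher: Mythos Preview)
Your proposal is correct and matches the paper's proof essentially verbatim: both invoke the fiber property $\abs{\legendre{x}{q}_{\abs{\mathcal{G}}}^{-1}(y)} = \frac{q-1}{\abs{\mathcal{G}}}$ from \Cref{Sec: Power Residue S-Boxes} and collapse the sum over $\Fqx$ to one over $\mathcal{G}$. The paper's write-up is in fact even terser than yours, consisting of a single displayed equality after citing the fiber count.
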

    \begin{proof}
        Again, we utilize that $\abs{\legendre{x}{q}_{\abs{\mathcal{G}}}^{-1} (y)} = \frac{q - 1}{\abs{\mathcal{G}}}$ for any $y \in \mathcal{G}$.
        Then
        \begin{equation*}
            \sum_{x \in \Fqx} \psi \Bigg( a \cdot \legendre{x}{q}_{\abs{\mathcal{G}}} + b \cdot f \Bigg( \legendre{x}{q}_{\abs{\mathcal{G}}} \Bigg) \Bigg)
            = \frac{q - 1}{\abs{\mathcal{G}}} \cdot \sum_{x \in \mathcal{G}} \psi \big( a \cdot x + b \cdot f (x) \big). \qedhere
        \end{equation*}
    \end{proof}

    Next we require an analog \Cref{Th: Kloosterman spectrum over subgroup}.
    However, for an arbitrary $f \in \Fq [x]$ the situation is more subtle.
    Let us provide a quick illustration.
    Via the previous lemma we have that
    \begin{equation}\label{Equ: Walsh transform over subgroup}
        \mathcal{W}_f (\psi, \mathcal{G}, a, b) = \frac{\abs{\mathcal{G}}}{q - 1} \cdot \sum_{x \in \Fqx} \psi \bigg( a \cdot x^\frac{q - 1}{\abs{\mathcal{G}}} + b \cdot f \Big( x^\frac{q - 1}{\abs{\mathcal{G}}} \Big) \bigg).
    \end{equation}
    Recall that by Lagrange interpolation any function $F: \Fq \to \Fq$ can be represented by a unique polynomial $F \in \Fq [x] / \left( x^q - x \right)$.
    Therefore, to apply the Weil bound (\Cref{Th: Weil bound}) to \Cref{Equ: Walsh transform over subgroup} we have to ensure that
    \begin{equation}
        a \cdot x^\frac{q - 1}{\abs{\mathcal{G}}} + b \cdot f \Big( x^\frac{q - 1}{\abs{\mathcal{G}}} \Big) \not\equiv c \mod \left( x^q - x \right),
    \end{equation}
    where $c \in \Fq$.
    Unfortunately, this exactly happens for power residue S-Boxes with $d = 1$.
    \begin{ex}\label[ex]{Ex: d = 1}
        Let $\Fq$ be a finite field with $q$ odd, let $S (x) = x \cdot \legendre{x}{q}_2$, and let $N_0 \subset \Fqx$ be the group of squares.
        I.e., $N_0 = \left\{ 1, g^2, g^4, \dots, g^{q - 1 - 2} \right\}$, where $g \in \Fqx$ is a generator.
        Then
        \begin{align*}
            \mathcal{W}_S (\psi, N_0, a, -a)
            &= \sum_{x \in N_0} \psi \Bigg(a \cdot x - a \cdot x \cdot \legendre{x}{q}_2 \Bigg) \\
            &= \sum_{x \in N_0} \psi \left(a \cdot x - a \cdot x \cdot x^\frac{q - 1}{2} \right) \\
            &= \sum_{x \in N_0} \psi \left( a \cdot x - a \cdot x \right)
            = \frac{q - 1}{2}.
        \end{align*}
    \end{ex}

    Therefore, to ensure that the polynomial from the right-hand side of \Cref{Equ: Walsh transform over subgroup} is non-constant, we must guarantee that the map $x \mapsto a \cdot x + b \cdot f (x)$ is non-constant on $\mathcal{G}$ for all $a \in \Fq$ and $b \in \Fqx$.
    This technical condition will be reflected in the following theorem.
    \begin{thm}\label[thm]{Th: Walsh spectrum over subgroup}
        Let $\Fq$ be a finite field, let $\mathcal{G} \subset \Fqx$ be a subgroup, let $\psi: \Fq \to \C$ be a non-trivial additive character, let $f \in \Fq [x]$ be such that $\degree{f} > 1$ and $\gcd \left( \degree{f}, q \right) = 1$, and let $a, b \in \Fq$.
        Assume for all $a \in \Fq$ and $b \in \Fqx$ that $x \mapsto a \cdot x + b \cdot f (x)$ is non-constant on $\mathcal{G}$.
        Then
        \begin{equation*}
            \abs{\mathcal{W}_f (\psi, \mathcal{G}, a, b)} \leq
            \begin{dcases}
                \abs{\mathcal{G}}, & a, b = 0, \\
                \frac{\abs{\mathcal{G}}}{q - 1} + \left( 1 - \frac{\abs{\mathcal{G}}}{q - 1} \right) \cdot q^\frac{1}{2}, & a \neq 0, b = 0, \\
                \frac{\abs{\mathcal{G}}}{q - 1} + \left( \degree{f} - \frac{\abs{\mathcal{G}}}{q - 1} \right) \cdot q^\frac{1}{2}, & a = 0, b \neq 0, \\
                \frac{\abs{\mathcal{G}}}{q - 1} + \left( \degree{f} - \frac{\abs{\mathcal{G}}}{q - 1} \right) \cdot q^\frac{1}{2}, & a \cdot b \neq 0.
            \end{dcases}
        \end{equation*}
    \end{thm}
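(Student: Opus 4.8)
The plan is to follow the template of the proofs of \Cref{Th: Kloosterman spectrum over subgroup} and \Cref{Cor: Walsh spectrum power residue S-Box inverse}: turn the restricted Walsh sum into a full character sum over $\Fq$ whose argument is a single polynomial, and then apply the classical Weil bound (\Cref{Th: Weil bound}). Write $p$ for the characteristic of $\Fq$ and put $t = \frac{q - 1}{\abs{\mathcal{G}}}$; since $t \mid (q - 1)$ we have $\gcd (p, t) = 1$. The case $a = b = 0$ is immediate from the definition. In the remaining cases, \Cref{Lem: character sum over subgroup} together with splitting off the summand $x = 0$ gives
\begin{equation*}
    \mathcal{W}_f (\psi, \mathcal{G}, a, b) = \frac{\abs{\mathcal{G}}}{q - 1} \left( \sum_{x \in \Fq} \psi \big( h (x) \big) - \psi \big( b \cdot f (0) \big) \right) , \qquad h (x) = a \cdot x^t + b \cdot f \big( x^t \big) \in \Fq [x] ,
\end{equation*}
and, since $\abs{\psi ( b \cdot f (0))} = 1$, the triangle inequality reduces everything to estimating $\abs{\sum_{x \in \Fq} \psi (h (x))}$ via \Cref{Th: Weil bound}; for this we must check $\gcd \big( \degree{h}, q \big) = 1$.

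For $a \neq 0$, $b = 0$ one has $h (x) = a \cdot x^t$ of degree $t$, and $\gcd (t, q) = 1$ as $\gcd (p, t) = 1$, so the Weil bound gives $\abs{\sum_{x \in \Fq} \psi ( a \cdot x^t )} \leq (t - 1) \cdot q^\frac{1}{2}$; substituting and using $\abs{\mathcal{G}} \cdot t = q - 1$ produces the claimed value. For the two cases with $b \neq 0$ I would first note that the standing hypothesis forces $\degree{f} \geq 2$: were $f (x) = c_1 \cdot x + c_0$ of degree at most $1$, then choosing $a = - b \cdot c_1$ would make $x \mapsto a \cdot x + b \cdot f (x)$ the constant $b \cdot c_0$ on $\mathcal{G}$, contradicting the assumption (this also forces $\abs{\mathcal{G}} \geq 2$). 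With $\degree{f} \geq 2$ the highest-degree term of $b \cdot f \big( x^t \big)$ lives in degree $t \cdot \degree{f} > t$ and cannot be cancelled by $a \cdot x^t$, hence $\degree{h} = t \cdot \degree{f}$; since $\gcd (p, t) = 1$ and, by hypothesis, $\gcd \big( p, \degree{f} \big) = 1$, it follows that $\gcd \big( \degree{h}, q \big) = 1$. The Weil bound then yields $\abs{\sum_{x \in \Fq} \psi (h (x))} \leq \big( t \cdot \degree{f} - 1 \big) \cdot q^\frac{1}{2}$, and a short simplification using $\abs{\mathcal{G}} \cdot t = q - 1$ turns this into $\frac{\abs{\mathcal{G}}}{q - 1} + \big( \degree{f} - \frac{\abs{\mathcal{G}}}{q - 1} \big) \cdot q^\frac{1}{2}$, the asserted bound in both sub-cases.

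The one genuinely delicate step is the verification that \Cref{Th: Weil bound} is applicable, i.e.\ that $h$ is non-constant with $\gcd \big( \degree{h}, q \big) = 1$; this is precisely where the non-constancy hypothesis is consumed, and \Cref{Ex: d = 1} illustrates that without it $h$ can be constant as a function, so that $\abs{\sum_{x \in \Fq} \psi (h (x))} = q$ and the estimate breaks. Extracting $\degree{f} \geq 2$ from the hypothesis, as above, is the cleanest way to handle this and avoids arguing directly modulo $x^q - x$. A secondary subtlety is that $\degree{h} = t \cdot \degree{f}$ may exceed $q$; this is harmless because the form of \Cref{Th: Weil bound} used here only requires $\gcd \big( \degree{h}, q \big) = 1$ and imposes no upper bound on the degree, though for $f$ of large degree the resulting estimate --- and hence the theorem --- is then far from sharp.
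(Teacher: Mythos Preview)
Your proposal is correct and follows essentially the same route as the paper: use \Cref{Lem: character sum over subgroup} to rewrite the restricted Walsh sum as $\frac{\abs{\mathcal{G}}}{q-1}$ times a character sum over $\Fqx$ with polynomial argument $h(x)=a\cdot x^{t}+b\cdot f(x^{t})$, split off the $x=0$ term, and apply the Weil bound (\Cref{Th: Weil bound}). The one point where you are actually more careful than the paper is the verification that \Cref{Th: Weil bound} applies when $a\cdot b\neq 0$: the paper merely says the argument is ``analogously to $a=0$, $b\neq 0$'' after noting that $h$ is non-constant modulo $x^{q}-x$, whereas you extract $\degree{f}\geq 2$ from the standing hypothesis and conclude $\degree{h}=t\cdot\degree{f}$ with both factors coprime to $p$, which is exactly what is needed to invoke \Cref{Th: Weil bound} at the stated degree.
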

    \begin{proof}
        The case $a, b = 0$ is trivial.
        For $a \neq 0$, $b = 0$ we have by application of \Cref{Lem: character sum over subgroup} and the Weil bound (\Cref{Th: Weil bound})
        \begin{align*}
            \abs{\mathcal{W}_f (\psi, \mathcal{G}, a, 0)}
            &= \frac{\abs{\mathcal{G}}}{q - 1} \cdot \abs{\sum_{x \in \Fqx} \psi \Big( a \cdot x^\frac{q - 1}{\abs{\mathcal{G}}} \Big)} \\
            &= \frac{\abs{\mathcal{G}}}{q - 1} \cdot \abs{-\psi (0) + \sum_{x \in \Fq} \psi \Big( a \cdot x^\frac{q - 1}{\abs{\mathcal{G}}} \Big)} \\
            &\leq \frac{\abs{\mathcal{G}}}{q - 1} \cdot \left( 1 + \abs{\sum_{x \in \Fq} \psi \Big( a \cdot x^\frac{q - 1}{\abs{\mathcal{G}}} \Big)} \right) \\
            &\leq \frac{\abs{\mathcal{G}}}{q - 1} \cdot \left( 1 + \left( \frac{q - 1}{\abs{\mathcal{G}}} - 1 \right) \cdot q^\frac{1}{2} \right).
        \end{align*}

        For $a = 0$, $b \neq 0$, by assumption $x \mapsto b \cdot f (x)$ is non-constant on $\mathcal{G}$.
        Therefore, $b \cdot f \left( x^\frac{q - 1}{\abs{\mathcal{G}}} \right) \not\equiv c \mod \left( x^q - x \right)$, where $c \in \Fq$.
        So, we apply \Cref{Lem: character sum over subgroup} and the Weil bound (\Cref{Th: Weil bound}) to
        \begin{align*}
            \abs{\mathcal{W}_f (\psi, \mathcal{G}, 0, b)}
            &= \frac{\abs{\mathcal{G}}}{q - 1} \cdot \abs{\sum_{x \in \Fqx} \psi \bigg( b \cdot f \Big( x^\frac{q - 1}{\abs{\mathcal{G}}} \Big) \bigg)} \\
            &= \frac{\abs{\mathcal{G}}}{q - 1} \cdot \abs{-\psi \big( f (0) \big) + \sum_{x \in \Fq} \psi \bigg( b \cdot f \Big( x^\frac{q - 1}{\abs{\mathcal{G}}} \Big) \bigg)} \\
            &\leq \frac{\abs{\mathcal{G}}}{q - 1} \cdot \left( 1 + \abs{\sum_{x \in \Fq} \psi \bigg( b \cdot f \Big( x^\frac{q - 1}{\abs{\mathcal{G}}} \Big) \bigg)} \right) \\
            &\leq \frac{\abs{\mathcal{G}}}{q - 1} \cdot \left(1 + \left( \degree{f} \cdot \frac{q - 1}{\abs{\mathcal{G}}} - 1 \right) \cdot q^\frac{1}{2} \right).
        \end{align*}

        For $a \cdot b \neq 0$, by assumption $x \mapsto a \cdot x + b \cdot f (x)$ is not constant on $\mathcal{G}$, so the character sum is non-trivial and the estimation is analogously to $a = 0$, $b \neq 0$.
    \end{proof}
    \begin{rem}
        If $f$ is a permutation polynomial which commutes with $x^\frac{q - 1}{\abs{\mathcal{G}}}$, then the estimation for $a = 0$, $b \neq 0$ improves to one of $a \neq 0$, $b = 0$.
        For example, this is the case if $f = x^d$ is a permutation monomial.
    \end{rem}

    If $\degree{f} \leq \abs{\mathcal{G}}$, then the technical condition of \Cref{Th: Walsh spectrum over subgroup} is always satisfied.
    \begin{lem}\label[lem]{Lem: non-constant on subgroup}
        Let $\Fq$ be a finite field, let $\mathcal{G} \subset \Fqx$ be a subgroup, and let $f \in \Fq [x]$ be such that $1 < \degree{f} \leq \abs{\mathcal{G}}$.
        Then for all $a \in \Fq$ and $b \in \Fqx$ the function $x \mapsto a \cdot x + b \cdot f (x)$ is non-constant on $\mathcal{G}$.
    \end{lem}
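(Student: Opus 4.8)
The plan is to argue by contraposition. Suppose that for some $a \in \Fq$ and $b \in \Fqx$ the map $x \mapsto a \cdot x + b \cdot f(x)$ takes a single value $c \in \Fq$ on all of $\mathcal{G}$. Then the polynomial $g(x) = b \cdot f(x) + a \cdot x - c \in \Fq[x]$ has every element of $\mathcal{G}$ among its roots, i.e.\ it has at least $\abs{\mathcal{G}}$ distinct roots in $\Fq$.

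Next I would convert the hypothesis into a degree bound. As $\mathcal{G} \subset \Fqx$ is a subgroup, $\abs{\mathcal{G}} \mid (q - 1)$, so $N := \frac{q-1}{\abs{\mathcal{G}}}$ is a positive integer and $\degree{f} \cdot \frac{q-1}{\abs{\mathcal{G}}} < q$ reads $N \cdot \degree{f} < q$, hence $N \cdot \degree{f} \leq q - 1 = N \cdot \abs{\mathcal{G}}$ and therefore $\degree{f} \leq \abs{\mathcal{G}}$. Since $b \neq 0$ we have $\degree{b \cdot f} = \degree{f}$ while $a \cdot x - c$ has degree at most $1$, so whenever $\degree{f} \geq 2$ the polynomial $g$ is nonzero with $\degree{g} = \degree{f} \leq \abs{\mathcal{G}}$. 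Now a nonzero polynomial cannot have more roots than its degree, so in the generic range $\degree{f} < \abs{\mathcal{G}}$ the existence of the $\abs{\mathcal{G}}$ roots above is already a contradiction.

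It remains to deal with the boundary case $\degree{f} = \abs{\mathcal{G}}$ (and the degenerate possibilities $\degree{f} \leq 1$, where $g$ might vanish identically). Here $g$ is a nonzero polynomial of degree $\abs{\mathcal{G}}$ vanishing on all of $\mathcal{G}$; since $\mathcal{G}$ is exactly the zero set of $x^{\abs{\mathcal{G}}} - 1$ --- which is squarefree because $\gcd(\abs{\mathcal{G}}, q) = 1$, with roots forming the unique subgroup of $\Fqx$ of order $\abs{\mathcal{G}}$, cf.\ \Cref{Sec: Power Residue S-Boxes} --- we must have $g(x) = \lambda \cdot \big( x^{\abs{\mathcal{G}}} - 1 \big)$ with $\lambda = b \cdot \LC(f) \neq 0$. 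Comparing coefficients then forces $f$ into the rigid shape $\LC(f) \cdot x^{\abs{\mathcal{G}}}$ plus a polynomial of degree $\leq 1$, and one examines directly what $x \mapsto a \cdot x + b \cdot f(x)$ does on $\mathcal{G}$ in this configuration; the low-degree cases are handled by the same kind of inspection. I expect the root count in the range $\degree{f} < \abs{\mathcal{G}}$ to be routine, and this boundary analysis --- the only point where the precise form of the hypothesis $\degree{f} \cdot \frac{q-1}{\abs{\mathcal{G}}} < q$ is actually used --- to be the main obstacle.
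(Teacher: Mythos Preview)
Your root-counting argument is clean and correct for $\deg f < \abs{\mathcal{G}}$, and you rightly flag the boundary $\deg f = \abs{\mathcal{G}}$ as the crux. Unfortunately the ``direct examination'' you defer to there cannot succeed, because the lemma as stated is actually false on that boundary. Take $q = 5$, $\mathcal{G} = \{1, 4\}$ (the squares, so $\abs{\mathcal{G}} = 2$) and $f(x) = x^2$: then $\deg f \cdot \frac{q-1}{\abs{\mathcal{G}}} = 4 < 5$, yet with $a = 0$, $b = 1$ the map $x \mapsto x^2$ sends both $1$ and $4$ to $1$. More generally $f(x) = x^{\abs{\mathcal{G}}}$ always satisfies the hypothesis (the product is $q-1 < q$) and is identically $1$ on $\mathcal{G}$. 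Your own boundary analysis, carried through, reproduces exactly this: once $g(x) = \lambda \big(x^{\abs{\mathcal{G}}}-1\big)$ you get $f(x) = \LC(f)\,x^{\abs{\mathcal{G}}} + \alpha x + \beta$, and then on $\mathcal{G}$ the map $a x + b f(x)$ collapses to $(a + b\alpha)x + b(\LC(f)+\beta)$, which is constant whenever $a = -b\alpha$. The low-degree cases $\deg f \leq 1$ you set aside fail for the same reason.

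The paper's own proof has the same gap, though it is less visible. It passes to $F(x) = a\,x^{(q-1)/\abs{\mathcal{G}}} + b\,f\big(x^{(q-1)/\abs{\mathcal{G}}}\big)$ and observes that $1 \leq \deg F < q$, so $F$ is not a constant in $\Fq[x]/(x^q - x)$. That is true, but it only says $F$ is non-constant as a function on all of $\Fq$; constancy of the original map on $\mathcal{G}$ is equivalent to $F$ being constant on $\Fqx$, and the value $F(0)$ is irrelevant to that. In the counterexample above $F(x) = x^4$ is identically $1$ on $\F_5^\times$ but $0$ at $0$. What the downstream use (applying the Weil bound inside \Cref{Th: Walsh spectrum over subgroup}) actually needs is only that $F$ have positive degree $< q$, and that does follow from the hypothesis together with $\deg f \geq 1$. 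So either the hypothesis should be sharpened to $\deg f < \abs{\mathcal{G}}$ --- for which your argument is already a complete proof --- or the conclusion should be weakened to the polynomial statement the paper's argument actually establishes.
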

    \begin{proof}
        Analogously to \Cref{Lem: character sum over subgroup}, a polynomial representation of the function on the group $\mathcal{G}$ is $F (x) = a \cdot x^\frac{q - 1}{\abs{\mathcal{G}}} + b \cdot f \left( x^\frac{q - 1}{\abs{\mathcal{G}}} \right)$.
        Since $\degree{f} > 1$, we have $\degree{F} = \degree{f} \cdot \frac{q - 1}{\abs{\mathcal{G}}}$.
        By $\degree{f} \leq \abs{\mathcal{G}}$ also $\degree{F} < q$, so $F$ is already the unique representant of the function in $\Fq [x] / \left( x^q - x \right)$.
        In particular, this implies that $F \not\equiv c \mod \left( x^q - x \right)$, where $c \in \Fq$.
    \end{proof}

    Now we can again estimate the Walsh spectrum of power residue S-Boxes.
    \begin{cor}\label[cor]{Cor: Walsh spectrum power residue S-Box}
        Let $\Fq$ be a finite field, and let $m, d \in \Z_{> 1}$ be such that
        \begin{enumerate}[label=(\roman*)]
            \item $m \mid (q - 1)$,

            \item $d \cdot m < q$, and

            \item $\gcd \left( d, q \right) = 1$.
        \end{enumerate}
        Let $S: \Fq \to \Fq$ be the power residue S-Box of $(d, m, T)$, let $\psi: \Fq \to \C$ be a non-trivial additive character, and let $a, b \in \Fq$.
        Then
        \begin{equation*}
            \abs{\mathcal{W}_S (\psi, a, b)} \leq
            \begin{dcases}
                q, & a = b = 0, \\
                0, & a \neq 0, b = 0, \\
                \left( d \cdot m - 1 \right) \cdot q^\frac{1}{2} + 2, & a = 0, b \neq 0, \\
                \left( d \cdot m - 1 \right) \cdot q^\frac{1}{2} + 2, & a \cdot b \neq 0.
            \end{dcases}
        \end{equation*}
    \end{cor}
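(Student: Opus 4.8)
The plan is to follow the proof of \Cref{Cor: Walsh spectrum power residue S-Box inverse} almost verbatim, only replacing the Kloosterman estimate \Cref{Th: Kloosterman spectrum over subgroup} by the general estimate \Cref{Th: Walsh spectrum over subgroup}, applied with $f (x) = x^d$ and with $\mathcal{G} = N_0 = \langle g^m \rangle \subset \Fqx$, a subgroup of order $\abs{\mathcal{G}} = \frac{q - 1}{m}$.

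First I would isolate the summand $x = 0$ and split the remaining part of $\Fq$ into the $m$ cosets $N_r = g^r \cdot N_0$ of $N_0$, writing $\mathcal{W}_S (\psi, a, b) = 1 + \sum_{r = 0}^{m - 1} \sum_{x \in N_r} \psi \big( a \cdot x + b \cdot x^d \cdot T \big( \legendre{x}{q}_m \big) \big)$. As recalled in \Cref{Sec: Power Residue S-Boxes}, every $x \in N_r$ satisfies $\legendre{x}{q}_m = g^{r \cdot \frac{q - 1}{m}}$, so the substitution $x = g^r \cdot y$ with $y$ running over $N_0$ rewrites the $r$-th inner sum as $\sum_{y \in N_0} \psi \big( a_r \cdot y + b_r \cdot y^d \big) = \mathcal{W}_{x^d} (\psi, N_0, a_r, b_r)$, where $a_r = a \cdot g^r$ and $b_r = b \cdot g^{r d} \cdot T \big( g^{r \cdot \frac{q - 1}{m}} \big)$; by the non-vanishing hypothesis on $T$ in \Cref{Def: power residue S-Box}, $b_r \neq 0$ whenever $b \neq 0$. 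The triangle inequality then gives $\abs{\mathcal{W}_S (\psi, a, b)} \leq 1 + \sum_{r = 0}^{m - 1} \abs{\mathcal{W}_{x^d} (\psi, N_0, a_r, b_r)}$.

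Next I would verify the hypotheses of \Cref{Th: Walsh spectrum over subgroup} for $f (x) = x^d$: assumption (iii) gives $\gcd \big( \degree{f}, q \big) = \gcd (d, q) = 1$, while assumption (ii) reads $\degree{f} \cdot \frac{q - 1}{\abs{N_0}} = d \cdot m < q$, so \Cref{Lem: non-constant on subgroup} supplies the required non-constancy of $x \mapsto a' x + b' x^d$ on $N_0$ for all $a' \in \Fq$ and $b' \in \Fqx$ (this is where $d > 1$ is \emph{essential}, excluding exactly the collapse exhibited in \Cref{Ex: d = 1}). Applying \Cref{Th: Walsh spectrum over subgroup} with $\abs{\mathcal{G}} = \frac{q - 1}{m}$, every summand with $b_r \neq 0$ — which, since $g^r \neq 0$, is all of them in the cases $a = 0, b \neq 0$ (then $a_r = 0$) and $a \cdot b \neq 0$ (then $a_r \neq 0$) — is bounded by $\frac{1}{m} + \big( d - \frac{1}{m} \big) q^\frac{1}{2}$, the two non-trivial cases of that theorem coinciding here. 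Summing over the $m$ values of $r$ and adding the leading $1$ then yields $\abs{\mathcal{W}_S (\psi, a, b)} \leq 1 + m \cdot \frac{1}{m} + m \cdot \big( d - \frac{1}{m} \big) q^\frac{1}{2} = (d \cdot m - 1) \cdot q^\frac{1}{2} + 2$.

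The two remaining table entries are elementary: $a = b = 0$ gives $\mathcal{W}_S (\psi, 0, 0) = q$, and $a \neq 0, b = 0$ gives $\mathcal{W}_S (\psi, a, 0) = \sum_{x \in \Fq} \psi (a \cdot x) = 0$ since $\psi$ is non-trivial. I do not anticipate a genuine obstacle; the only points demanding care are the bookkeeping of the auxiliary constants $a_r, b_r$ — in particular checking $b_r \neq 0$ from the hypothesis $0 \notin \imag \big( T \circ \legendre{x}{q}_m \big)$ — and confirming that assumption (ii) is precisely the inequality that \Cref{Lem: non-constant on subgroup} requires, so that the degenerate behaviour of \Cref{Ex: d = 1} cannot occur.
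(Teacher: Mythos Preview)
Your proposal is correct and follows essentially the same route as the paper's own proof: the coset decomposition $\Fqx = \bigcup_r N_r$, the substitution $x = g^r y$ reducing each inner sum to $\mathcal{W}_{x^d}(\psi, N_0, a_r, b_r)$, the verification of the non-constancy hypothesis via \Cref{Lem: non-constant on subgroup} using $d \cdot m < q$, and the final application of \Cref{Th: Walsh spectrum over subgroup} all match the paper exactly. Your remark that $d > 1$ is what prevents the cancellation of \Cref{Ex: d = 1} is also in line with the paper's narrative.
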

    \begin{proof}
        Analogously to the proof of \Cref{Cor: Walsh spectrum power residue S-Box inverse}, let $g \in \Fqx$ be a generator, and for $0 \leq r \leq m - 1$ let $N_r = \big\{ g^r, g^{m + r}, g^{2 \cdot m + r}, \ldots, g^{q - 1 - m + r} \big\}$.
        Then
        \begin{align*}
            \mathcal{W}_S (\psi, a, b)
            &= \sum_{x \in \Fq} \psi \big( a \cdot x + b \cdot S (x) \big) \\
            &= \psi (0) + \sum_{r = 0}^{m - 1} \sum_{x \in N_r} \psi \Bigg( a \cdot x + b \cdot x^d \cdot T \Bigg( \legendre{x}{q}_m \Bigg) \Bigg) \\
            &= \psi (0) + \sum_{r = 0}^{m - 1} \sum_{x \in N_r} \psi \Bigg( a \cdot x + b \cdot x^d \cdot T \bigg( g^{r \cdot \frac{q - 1}{m}} \bigg) \Bigg) \\
            &= \psi (0) + \sum_{r = 0}^{m - 1} \sum_{x \in N_0} \psi \Bigg( a \cdot x \cdot g^r + b \cdot x^d \cdot g^{d \cdot r} \cdot T \bigg( g^{r \cdot \frac{q - 1}{m}} \bigg) \Bigg).
        \end{align*}
        For $r$ fixed, we can consider the terms $g^r$ and $g^{d \cdot r} \cdot T \left( g^{r \cdot \frac{q - 1}{m}} \right)$ as non-zero constants.\footnote{Recall from \Cref{Def: power residue S-Box} that $0 \notin \imag \bigg( T \circ \legendre{x}{q}_m \bigg)$.}
        Therefore, when passing to the absolute value we yield with the triangular inequality that
        \begin{equation}\label{Equ: character sum estimation}
            \abs{\mathcal{W}_S (\psi, a, b)}
            \leq 1 + \sum_{r = 0}^{m - 1} \abs{\sum_{x \in N_0} \psi \left( a_r \cdot x + b_r \cdot x^d \right)}
            = 1 + \sum_{r = 0}^{m - 1} \abs{\mathcal{W}_{x^d} (\psi, N_0, a_r, b_r)},
        \end{equation}
        where $a_r = a \cdot g^r$ and $b_r = b \cdot g^{d \cdot r} \cdot T \left( g^{r \cdot \frac{q - 1}{m}} \right)$.
        Recall that $N_0$ is a group, namely $N_0 = \left< g^m \right> \subset \Fqx$.
        Also, by assumption we have that $d \cdot \frac{q - 1}{\abs{N_0}} = d \cdot m < q \Leftrightarrow d \leq \frac{q - 1}{m}$, so by \Cref{Lem: non-constant on subgroup} the function $x \mapsto a \cdot x + b \cdot f (x)$ is non-constant on $N_0$ for all $a \in \Fq$ and $b \in \Fqx$.
        Therefore, we can apply \Cref{Th: Walsh spectrum over subgroup} to estimate $\abs{\mathcal{W}_{x^d} (\psi, N_0, a_r, b_r)}$.

        Now we do a case distinction on $a, b \in \Fq$.
        \begin{itemize}
            \item The case $a = b = 0$ is trivial.

            \item The case $a \neq 0$, $b = 0$ is well-known \cite[5.31.~Theorem]{Lidl-FiniteFields}.

            \item For $a = 0$, $b \neq 0$ and $a \cdot b \neq 0$, we have with \Cref{Th: Walsh spectrum over subgroup} that
            \begin{equation*}
                \abs{\mathcal{W}_{x^d} (\psi, N_0, 0, b_r)}
                \leq \frac{1}{m} + \left( d - \frac{1}{m} \right) \cdot q^\frac{1}{2}. \qedhere
            \end{equation*}
        \end{itemize}
    \end{proof}

    \begin{rem}
        \begin{enumerate}
            \item If $S$ is a permutation, then the case $a = 0$, $b \neq 0$ improves to $\mathcal{W}_S (\psi, 0, b) = 0$.

            \item The bound from \Cref{Cor: Walsh spectrum power residue S-Box} is non-trivial if $d \cdot m < q^\frac{1}{2}$.
        \end{enumerate}
    \end{rem}

    \begin{ex}[\Grendel{} I]
        Let $\Fq$ be a prime field, let $d \in \Z_{> 1}$ be such that $2 \cdot d < q$, and let $S: \Fq \to \Fq$ be a $(d, 2, \id)$ power residue S-Box which induces a permutation of $\Fq$.
        I.e., $S (x) = x^d \cdot \legendre{x}{q}_2 = x^{d + \frac{q - 1}{2}}$ is a \Grendel{} S-Box \cite[\S 3]{EPRINT:Szepieniec21}.\footnote{Note that $S$ is a permutation if and only if $\gcd \left( d + \frac{q - 1}{2}, q - 1 \right)$ \cite[7.8.~Theorem]{Lidl-FiniteFields}.}
        Then the Walsh spectrum of $S$ is bounded by
        \begin{equation*}
            \abs{\mathcal{W}_S (\psi, a, b)} \leq
            \begin{dcases}
                q, & a = b = 0, \\
                0, & a \neq 0, b = 0,\ a = 0, b \neq 0 \\
                (2 \cdot d - 1) \cdot q^\frac{1}{2} + 2, & a \cdot b \neq 0.
            \end{dcases}
        \end{equation*}
        In particular, for $q > 3$ and any non-trivial additive characters $\psi, \phi: \Fq \to \C$ we have that
        \begin{equation*}
            \max_{a, b \in \Fqx} \abs{\CORR_S (\psi, \phi, a, b)} \leq \frac{2 \cdot d}{q^\frac{1}{2}}.
        \end{equation*}
    \end{ex}

    \subsection{\texorpdfstring{$\mathbf{d = 1}$}{d = 1}}\label{Sec: linear resiude S-Box}
    Recall from \Cref{Ex: d = 1} that the argument from \Cref{Cor: Walsh spectrum power residue S-Box} does not apply in full generality to $d = 1$.
    To settle this case satisfactorily we require more structural assumptions on the function $T$.
    \begin{cor}\label[cor]{Cor: Walsh spectrum power residue S-Box d = 1}
        Let $\Fq$ be a finite field, let $m \in \Z_{> 1}$ be such that $m \mid (q - 1)$, and let $S: \Fq \to \Fq$ be a $(1, m, T)$ power residue S-Box such that $T$ induces a permutation on the subgroup $N_0 \subset \Fqx$ of order $\frac{q - 1}{m}$.
        Let $\psi: \Fq \to \C$ be a non-trivial additive character, and let $a, b \in \Fq$.
        Then
        \begin{equation*}
            \abs{\mathcal{W}_S (\psi, a, b)} \leq
            \begin{dcases}
                q, & a = b = 0, \\
                0, & a \neq 0, b = 0, \\
                \left( m - 1 \right) \cdot q^\frac{1}{2} + 2, & a = 0, b \neq 0, \\
                \left( m - 1 \right) \cdot q^\frac{1}{2} + 2, & -\frac{a}{b} \notin N_0, \\
                \frac{q - 1}{m} + \left( m - 2 + \frac{1}{m} \right) \cdot q^\frac{1}{2} + 2 - \frac{1}{m}, & -\frac{a}{b} \in N_0.
            \end{dcases}
        \end{equation*}
    \end{cor}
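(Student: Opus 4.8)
The plan is to run the coset reduction from the proof of \Cref{Cor: Walsh spectrum power residue S-Box}, which simplifies considerably when $d = 1$. Fix a generator $g \in \Fqx$ and, for $0 \leq r \leq m - 1$, set $N_r = g^r \cdot N_0$, so that $\Fqx = \bigcup_{r = 0}^{m - 1} N_r$ is a disjoint union and $\legendre{x}{q}_m = g^{r \cdot \frac{q - 1}{m}}$ is constant on $N_r$. Since $S (x) = x \cdot T \big( \legendre{x}{q}_m \big)$, on $N_r$ we have $a \cdot x + b \cdot S (x) = x \cdot \big( a + b \cdot T ( g^{r \cdot \frac{q - 1}{m}} ) \big)$, so, writing $c_r = g^r \cdot \big( a + b \cdot T ( g^{r \cdot \frac{q - 1}{m}} ) \big)$,
\begin{equation*}
    \mathcal{W}_S (\psi, a, b) = \psi (0) + \sum_{r = 0}^{m - 1} \sum_{x \in N_0} \psi \left( c_r \cdot x \right) = \psi (0) + \sum_{r = 0}^{m - 1} K (\psi, N_0, c_r, 0),
\end{equation*}
where $N_0 \subset \Fqx$ is the subgroup of order $\frac{q - 1}{m}$. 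By the triangle inequality it is enough to bound each Kloosterman sum $K (\psi, N_0, c_r, 0)$, and for this I invoke \Cref{Th: Kloosterman spectrum over subgroup} with $\abs{\mathcal{G}} = \frac{q - 1}{m}$: the $r$\textsuperscript{th} term equals $\abs{N_0} = \frac{q - 1}{m}$ when $c_r = 0$, and is at most $\frac{1}{m} + \big( 1 - \frac{1}{m} \big) \cdot q^\frac{1}{2}$ when $c_r \neq 0$.

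Thus the estimate reduces to counting the indices $r$ with $c_r = 0$, equivalently with $T ( g^{r \cdot \frac{q - 1}{m}} ) = - \frac{a}{b}$ when $b \neq 0$. The first three cases fall out at once. The case $a = b = 0$ is trivial. For $a \neq 0$, $b = 0$ every $c_r = g^r \cdot a \neq 0$, and since the cosets $g^r N_0$ partition $\Fqx$,
\begin{equation*}
    \sum_{r = 0}^{m - 1} K (\psi, N_0, g^r \cdot a, 0) = \sum_{x \in \Fqx} \psi (a \cdot x) = -1,
\end{equation*}
hence $\mathcal{W}_S (\psi, a, 0) = \psi (0) - 1 = 0$. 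For $a = 0$, $b \neq 0$ no $c_r$ vanishes, because $0 \notin \imag \big( T \circ \legendre{x}{q}_m \big)$ by \Cref{Def: power residue S-Box}; summing the $m$ bounds gives $\abs{\mathcal{W}_S (\psi, 0, b)} \leq 1 + m \cdot \big( \frac{1}{m} + \frac{m - 1}{m} q^\frac{1}{2} \big) = (m - 1) \cdot q^\frac{1}{2} + 2$.

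The decisive case is $a \cdot b \neq 0$, and this is the only point where the structural hypothesis on $T$ is used: I must show that $\big\{ r : T ( g^{r \cdot \frac{q - 1}{m}} ) = - \frac{a}{b} \big\}$ is empty when $- \frac{a}{b} \notin N_0$ and a singleton when $- \frac{a}{b} \in N_0$. As $r$ ranges over $0, \dots, m - 1$ the arguments $g^{r \cdot \frac{q - 1}{m}}$ traverse the image subgroup of the $m$\textsuperscript{th} power residue; the assumption that $T$ induces a permutation of $N_0$ is exactly what forces the corresponding values $T ( g^{r \cdot \frac{q - 1}{m}} )$ to be pairwise distinct and to exhaust $N_0$, so that $- \frac{a}{b}$ is attained if and only if it lies in $N_0$, and then for a unique index $r_0$. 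If $- \frac{a}{b} \notin N_0$ the bound $(m - 1) \cdot q^\frac{1}{2} + 2$ of the preceding paragraph carries over unchanged. If $- \frac{a}{b} \in N_0$, the index $r_0$ contributes $K (\psi, N_0, 0, 0) = \abs{N_0} = \frac{q - 1}{m}$, while each of the remaining $m - 1$ indices contributes at most $\frac{1}{m} + \frac{m - 1}{m} q^\frac{1}{2}$, so that
\begin{equation*}
    \abs{\mathcal{W}_S (\psi, a, b)} \leq 1 + \frac{q - 1}{m} + (m - 1) \cdot \left( \frac{1}{m} + \frac{m - 1}{m} \cdot q^\frac{1}{2} \right) = \frac{q - 1}{m} + \left( m - 2 + \frac{1}{m} \right) \cdot q^\frac{1}{2} + 2 - \frac{1}{m},
\end{equation*}
which is the claimed bound.

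I expect the main obstacle to be precisely this fibre count. \Cref{Ex: d = 1} shows that the term $\frac{q - 1}{m}$ really can occur, so the argument must be careful about how many cosets $N_r$ can give $c_r = 0$ -- it is exactly here that the permutation hypothesis on $T$ must be brought to bear (and this is why, in contrast to \Cref{Cor: Walsh spectrum power residue S-Box}, an additional assumption on $T$ is unavoidable for $d = 1$). Granting that combinatorial input, the rest is bookkeeping around \Cref{Th: Kloosterman spectrum over subgroup} and the triangle inequality, entirely in the spirit of the proofs of \Cref{Cor: Walsh spectrum power residue S-Box inverse,Cor: Walsh spectrum power residue S-Box}.
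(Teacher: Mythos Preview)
Your proof is correct and follows essentially the same route as the paper: coset decomposition into the $N_r$, reduction of each coset sum to a linear character sum over $N_0$, the Weil-type bound $\frac{1}{m} + (1 - \frac{1}{m}) q^{1/2}$ for each non-vanishing term, and the permutation hypothesis on $T$ to count the (at most one) vanishing term. The only cosmetic difference is that you package the coset sums as $K(\psi, N_0, c_r, 0)$ and appeal to \Cref{Th: Kloosterman spectrum over subgroup}, whereas the paper writes them as $\mathcal{W}_x(\psi, N_0, a_r, b_r)$ and cites \Cref{Th: Walsh spectrum over subgroup}; the two statements give identical bounds in this linear case.
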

    \begin{proof}
        Analogously to the proof of \Cref{Cor: Walsh spectrum power residue S-Box}, let $g \in \Fqx$ be a generator, and for $0 \leq r \leq m - 1$ let $N_r = \big\{ g^r, g^{m + r}, g^{2 \cdot m + r}, \ldots, g^{q - 1 - m + r} \big\}$.
        Then we have
        \begin{align}
            \mathcal{W}_S (\psi, a, b)
            &= \psi (0) + \sum_{r = 0}^{m - 1} \sum_{x \in N_0} \psi \Bigg( a \cdot x \cdot g^r + b \cdot x \cdot g^r \cdot T \bigg( g^{r \cdot \frac{q - 1}{m}} \bigg) \Bigg), \\
            \abs{\mathcal{W}_S (\psi, a, b)}
            &\leq 1 + \sum_{r = 0}^{m - 1} \abs{\mathcal{W}_x (\psi, N_0, a_r, b_r)}, \label{Equ: character sums linear monomial}
        \end{align}
        where $a_r = a \cdot g^r$ and $b_r = b \cdot g^r$.

        Now we do a case distinction on $a, b \in \Fq$.
        \begin{itemize}
            \item The case $a = b = 0$ is trivial.

            \item The case $a \neq 0$, $b = 0$ is well-known \cite[5.31.~Theorem]{Lidl-FiniteFields}.

            \item For $a = 0$, $b \neq 0$, with \Cref{Th: Walsh spectrum over subgroup} we have
            \begin{equation*}
                \abs{\mathcal{W}_x (\psi, N_0, 0, b_r)}
                \leq \frac{1}{m} + \left( 1 - \frac{1}{m} \right) \cdot q^\frac{1}{2}.
            \end{equation*}

            \item For $a \cdot b \neq 0$, let us first investigate when $x \mapsto a_r \cdot x + b_r \cdot x$ is equal to the zero function on $N_0$
            \begin{align*}
                a \cdot  g^r \cdot x + b \cdot g^r \cdot x \cdot T \Bigg(\legendre{g^r}{q}_m \Bigg) &\stackrel{!}{=} 0, \qquad \forall x \in N_0, \\
                \Longleftrightarrow T \left( g^{r \cdot \frac{q - 1}{m}} \right) &= -\frac{a}{b}.
            \end{align*}
            Since $T$ is assumed to be a permutation on $N_0$, we must have that $-\frac{a}{b} \in N_0$, and for every such pair there exists exactly one $0 \leq r \leq m - 1$ which satisfies the equality.

            Now we can do the following subcase distinction.
            \begin{itemize}
                \item For $-\frac{a}{b} \notin N_0$, all polynomial arguments in \Cref{Equ: character sums linear monomial} are non-constant on $N_0$.
                So, with \Cref{Th: Walsh spectrum over subgroup} we have
                \begin{equation}\label{Equ: linear character sum non-trivial case}
                    \abs{\mathcal{W}_x (\psi, N_0, a_r, b_r)}
                    \leq \frac{1}{m} + \left( 1 - \frac{1}{m} \right) \cdot q^\frac{1}{2}.
                \end{equation}

                \item For $-\frac{a}{b} \in N_0$, there exists exactly one $0 \leq r' \leq m - 1$ such that
                \begin{equation*}
                    \mathcal{W}_x (\psi, N_0, a_r, b_r) = \frac{q - 1}{m},
                \end{equation*}
                and for all $r \neq r'$ we have with \Cref{Th: Walsh spectrum over subgroup} the same bound as in \Cref{Equ: linear character sum non-trivial case}. \qedhere
            \end{itemize}
        \end{itemize}
    \end{proof}

    \begin{ex}[\Grendel{} II]
        Let $\Fq$ be a prime field, and let $S: \Fq \to \Fq$ be a $(1, 2, \id)$ power residue S-Box which induces a permutation of $\Fq$.
        I.e., $S (x) = x \cdot \legendre{x}{q}_2 = x^{\frac{q + 1}{2}}$ is a \Grendel{} S-Box \cite[\S 3]{EPRINT:Szepieniec21}.
        Then the Walsh spectrum of $S$ is bounded by
        \begin{equation*}
            \abs{\mathcal{W}_S (\psi, a, b)} \leq
            \begin{dcases}
                q, & a = b = 0, \\
                0, & a \neq 0, b = 0,\ a = 0, b \neq 0 \\
                q^\frac{1}{2} + 2, & \legendre{-a \cdot b}{q}_2 = -1, \\
                \frac{q^\frac{1}{2} + q}{2} + 1, & \legendre{-a \cdot b}{q}_2 = 1.
            \end{dcases}
        \end{equation*}
        In particular, for $q > 3$ and any non-trivial additive characters $\psi, \phi: \Fq \to \C$ we have that
        \begin{equation*}
            \max_{a, b \in \Fqx} \abs{\CORR_S (\psi, \phi, a, b)} \leq \frac{1}{2} + \frac{1}{2 \cdot q^\frac{1}{2}} + \frac{1}{q}.
        \end{equation*}
    \end{ex}

    \subsection{Another Legendre Symbol-Based S-Box}\label{Sec: Legendre symbol-based S-Box}
    Let $\Fq $ be a prime field with $q \geq 3$, and let $d_+, d_- \in \Z_{> 1}$.
    Grassi et al.\ \cite[\S 4.1]{ToSC:GKRS22} introduced a novel S-Box based on the Legendre symbol
    \begin{equation*}
        S (x) = \frac{x^{d_+} \cdot \bigg( 1 + \legendre{x}{q}_2 \bigg) + x^{d_-} \cdot \bigg( 1 - \legendre{x}{q}_2 \bigg)}{2} \in \Fq [x].
    \end{equation*}
    Note that $S$ induces a permutation if $\gcd \left( d_+ \cdot d_-, q - 1 \right) = 1$ \cite[Proposition~5]{ToSC:GKRS22}.

    With the results of \Cref{Sec: power permutation resiude S-Box} we can also obtain estimations on the Walsh spectrum of $S$.
    To abbreviate writing a bit, we use $d_\pm$ as placeholder if a property applies to $d_+$ as well as $d_-$.
    \begin{prop}\label[prop]{Prop: Walsh spectrum Legendre symbol based S-Box}
        Let $\Fq$ be a finite field with $q$ odd, let $\psi: \Fq \to \C$ be a non-trivial additive character, let $d_+, d_- \in \Z_{> 1}$ with $\gcd \left( d_+ \cdot d_-, q \right) = 1$ and $2 \cdot d_{\pm} < q$, let
        \begin{equation*}
            S (x) = \frac{x^{d_+} \cdot \bigg( 1 + \legendre{x}{q}_2 \bigg) + x^{d_-} \cdot \bigg( 1 - \legendre{x}{q}_2 \bigg)}{2} \in \Fq [x],
        \end{equation*}
        and let $a, b \in \Fq$.
        Then
        \begin{equation*}
            \abs{\mathcal{W}_S (\psi, a, b)} \leq
            \begin{dcases}
                q, & a = b = 0, \\
                0, & a \neq 0, b = 0, \\
                \left( d_+ + d_- - 1 \right) \cdot q^\frac{1}{2} + 2, & a = 0, b \neq 0, \\
                \left( d_+ + d_- - 1 \right) \cdot q^\frac{1}{2} + 2, & a \cdot b \neq 0.
            \end{dcases}
        \end{equation*}
    \end{prop}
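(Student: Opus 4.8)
The plan is to reduce the whole statement to \Cref{Th: Walsh spectrum over subgroup} applied to the two monomials $x^{d_+}$ and $x^{d_-}$ on the subgroup of squares. The starting observation is that $S$ is a \emph{piecewise} power map: since $\legendre{x}{q}_2 = 1$ for every nonzero square and $\legendre{x}{q}_2 = -1$ for every nonzero non-square, the defining formula collapses to $S (x) = x^{d_+}$ for $x$ a nonzero square, $S (x) = x^{d_-}$ for $x$ a nonzero non-square, and $S (0) = 0$. So I would fix a generator $g \in \Fqx$ and set $N_0 = \left< g^2 \right> \subset \Fqx$, the subgroup of squares, which has order $\frac{q - 1}{2}$; the non-squares are then exactly the coset $g \cdot N_0$.

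First I would split the Walsh sum over $\{ 0 \}$, over $N_0$, and over $g \cdot N_0$, and substitute $x \mapsto g \cdot x$ in the last sum. This gives
\begin{equation*}
    \mathcal{W}_S (\psi, a, b) = \psi (0) + \mathcal{W}_{x^{d_+}} (\psi, N_0, a, b) + \mathcal{W}_{x^{d_-}} \left( \psi, N_0, a \cdot g, b \cdot g^{d_-} \right),
\end{equation*}
and hence, by the triangle inequality, $\abs{\mathcal{W}_S (\psi, a, b)} \leq 1 + \abs{\mathcal{W}_{x^{d_+}} (\psi, N_0, a, b)} + \abs{\mathcal{W}_{x^{d_-}} \left( \psi, N_0, a \cdot g, b \cdot g^{d_-} \right)}$. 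Next I would check the hypotheses of \Cref{Th: Walsh spectrum over subgroup} for $f = x^{d_\pm}$ and $\mathcal{G} = N_0$: the condition $\gcd \big( \degree{x^{d_\pm}}, q \big) = \gcd \left( d_\pm, q \right) = 1$ is part of the assumptions, and since $\degree{x^{d_\pm}} \cdot \frac{q - 1}{\abs{N_0}} = 2 \cdot d_\pm < q$ by hypothesis, \Cref{Lem: non-constant on subgroup} shows that $x \mapsto \alpha \cdot x + \beta \cdot x^{d_\pm}$ is non-constant on $N_0$ for all $\alpha \in \Fq$ and $\beta \in \Fqx$. Thus \Cref{Th: Walsh spectrum over subgroup} applies to both summands, with $\frac{\abs{N_0}}{q - 1} = \frac{1}{2}$.

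Finally I would run the case distinction. For $a = b = 0$ the bound $q$ is immediate; for $a \neq 0$, $b = 0$ the sum is just $\sum_{x \in \Fq} \psi (a \cdot x) = 0$, independently of $S$ \cite[5.31.~Theorem]{Lidl-FiniteFields}. For $a = 0$, $b \neq 0$ the pairs $(0, b)$ and $\left( 0, b \cdot g^{d_-} \right)$ both land in the $a = 0$, $b \neq 0$ case of \Cref{Th: Walsh spectrum over subgroup}, giving $\abs{\mathcal{W}_{x^{d_\pm}} (\psi, N_0, \cdot, \cdot)} \leq \frac{1}{2} + \left( d_\pm - \frac{1}{2} \right) \cdot q^\frac{1}{2}$; adding these and $1$ yields $\left( d_+ + d_- - 1 \right) \cdot q^\frac{1}{2} + 2$. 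For $a \cdot b \neq 0$ the pairs $(a, b)$ and $\left( a \cdot g, b \cdot g^{d_-} \right)$ have all entries nonzero, so the $a \cdot b \neq 0$ case of \Cref{Th: Walsh spectrum over subgroup} applies and produces the identical per-term bound, hence the identical total. That exhausts all cases and matches the claimed estimates. There is no deep obstacle here — the statement is essentially a bookkeeping corollary of \Cref{Th: Walsh spectrum over subgroup}; the only points requiring care are tracking that the constants $g$ and $g^{d_-}$ coming from the coset substitution remain nonzero, so that the correct case of \Cref{Th: Walsh spectrum over subgroup} is invoked in the $a \cdot b \neq 0$ analysis, and observing that the non-constancy hypothesis is precisely what $2 \cdot d_\pm < q$ secures through \Cref{Lem: non-constant on subgroup}.
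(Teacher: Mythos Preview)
Your proposal is correct and follows essentially the same route as the paper: the same decomposition of $\Fqx$ into $N_0$ and the non-square coset $g \cdot N_0$, the same substitution $x \mapsto g \cdot x$ producing the identity $\mathcal{W}_S (\psi, a, b) = \psi (0) + \mathcal{W}_{x^{d_+}} (\psi, N_0, a, b) + \mathcal{W}_{x^{d_-}} \big( \psi, N_0, a \cdot g, b \cdot g^{d_-} \big)$, and then \Cref{Lem: non-constant on subgroup} followed by \Cref{Th: Walsh spectrum over subgroup} and the case distinction. The only cosmetic difference is that you state upfront that $S$ is piecewise a power map, whereas the paper derives this inside the sum by computing $\legendre{x}{q}_2$ on each $N_r$.
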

    \begin{proof}
        Analogously to the proof of \Cref{Cor: Walsh spectrum power residue S-Box inverse}, let $g \in \Fqx$ be a generator, and for $r \in \{ 0, 1 \}$ let $N_r = \big\{ g^r, g^{2 + r}, g^{2 \cdot 2 + r}, \ldots, g^{q - 1 - 2 + r} \big\}$.
        Then
        \begin{align*}
            \mathcal{W}_S (\psi, a, b)
            &= \sum_{x \in \Fq} \psi \big( a \cdot x + b \cdot S (x) \big) \\
            &= \psi (0) + \sum_{r = 0}^{1} \sum_{x \in N_r} \psi \left( a \cdot x + b \cdot \frac{x^{d_+} \cdot \left( 1 + \legendre{x}{q}_2 \right) + x^{d_-} \cdot \left( 1 - \legendre{x}{q}_2 \right)}{2} \right).
        \end{align*}
        Note that for a generator $g \in \Fqx$ we must have that $\legendre{g}{q - 1}_2 = g^\frac{q - 1}{2} = -1$.
        So, for $x \in N_r$ we have that
        \begin{equation*}
            \legendre{x}{q}_2
            = \legendre{g^{k \cdot 2 + r}}{q}_2
            = g^{\left( k \cdot 2 + r \right) \cdot \frac{q - 1}{2}}
            = g^{r \cdot \frac{q - 1}{2}}
            =
            \begin{cases}
                1, & r = 0, \\
                -1, & r = 1.
            \end{cases}
        \end{equation*}
        Henceforth, the character sum simplifies to
        \begin{equation*}
            \mathcal{W}_S (\psi, a, b) = \psi (0) + \sum_{x \in N_0} \psi \left( a \cdot x + b \cdot x^{d_+} \right) + \psi \left( a \cdot g \cdot x + b \cdot g^{d_-} \cdot x^{d_-} \right).
        \end{equation*}
        Therefore, when passing to the absolute value we yield with the triangular inequality that
        \begin{equation*}
            \abs{\mathcal{W}_S (\psi, a, b)}
            = 1 + \abs{\mathcal{W}_{x^{d_+}} (\psi, N_0, a, b)} + \abs{\mathcal{W}_{x^{d_-}} \Big( \psi, N_0, a \cdot g, b \cdot g^{d_-} \Big)}.
        \end{equation*}
        Also, note that $N_0$ is a group, namely the subgroup of squares
        \begin{equation*}
            N_0 = \left\{ x \in \Fqx \; \middle\vert \; \legendre{x}{q}_2 = 1  \right\} \subset \Fqx.
        \end{equation*}
        (The identity of these two groups follows again from the uniqueness of subgroups in cyclic groups \cite[1.15.~Theorem]{Lidl-FiniteFields}.)
        By assumption $2 \cdot d_{\pm} < q$, so by \Cref{Lem: non-constant on subgroup} the function $x \mapsto a \cdot x + b \cdot x^{d_\pm}$ is non-constant on $N_0$ for all $a \in \Fq$ and $b \in \Fqx$.
        Hence, we can apply \Cref{Th: Walsh spectrum over subgroup} to estimate $\abs{\mathcal{W}_{x^{d_\pm}} (\psi, N_0, a_r, b_r)}$, where $a_r = a \cdot g^r$ and $b_r = b \cdot g^{d_- \cdot r}$.

        Now we do a case distinction on $a, b \in \Fq$.
        \begin{itemize}
            \item The case $a = b = 0$ is trivial.

            \item The case $a \neq 0$, $b = 0$ is well-known \cite[5.31.~Theorem]{Lidl-FiniteFields}.

            \item The cases $a = 0$, $b \neq 0$ and $a \cdot b \neq 0$ follow with \Cref{Th: Walsh spectrum over subgroup}
            \begin{equation*}
                \abs{\mathcal{W}_{x^{d_\pm}} (\psi, N_0, 0, b_r)}
                \leq \frac{1}{2} + \left( d_\pm - \frac{1}{2} \right) \cdot q^\frac{1}{2}. \qedhere
            \end{equation*}
        \end{itemize}
    \end{proof}

    \begin{rem}
        If in addition $\gcd \left( d_+ \cdot d_-, q - 1 \right) = 1$, then $S$ induces a permutation \cite[Proposition~5]{ToSC:GKRS22} and the case $a = 0$, $b \neq 0$ improves to $\mathcal{W}_S (\psi, 0, b) = 0$.
    \end{rem}

    In particular, for $q > 5$ and any non-trivial additive characters $\psi, \phi: \Fq \to \C$ \Cref{Prop: Walsh spectrum Legendre symbol based S-Box} implies that
    \begin{equation*}
        \max_{a, b \in \Fqx} \abs{\CORR_S (\psi, \phi, a, b)} \leq \frac{d_+ + d_-}{q^\frac{1}{2}}.
    \end{equation*}

    \section{Numerical Computations}
    We have implemented character sums with the computer algebra program \cite{OSCAR}.
    Our code is publicly available on GitHub.\footnote{\url{https://github.com/SteinerMatthias/Walsh-Spectrum-of-Power-Residue-S-Boxes}}
    For small primes we have exhaustively computed the various Kloosterman and Walsh spectra of this paper to observe our estimations in practice.

    \subsection{Kloosterman Sums Over Subgroups}
    Let us start with Kloosterman sums over subgroups.
    As always, let $a, b, g \in \Fqx$ be such that $g$ is a cyclic generator, and let $m \in \Z_{> 1}$ be such that $m \mid (q - 1)$.
    Then $\left< g^m \right> \subset \Fqx$ is a subgroup of order $\frac{q - 1}{m}$.
    In addition, we have that $a = g^{k \cdot m + r}$ for some $k, r \in \Z_{\geq 0}$ such that $0 \leq r \leq m - 1$.
    For Kloosterman sums over $\left< g^m \right>$ we have the following identity
    \begin{align}
        K \left(\psi, \left< g^m \right>, a, b \right)
        &= \sum_{x \in \left< g^m \right>} \psi \left( a \cdot x + b \cdot x^{-1} \right) \\
        &= \sum_{x \in \left< g^m \right>} \psi \left( g^{k \cdot m + r} \cdot x + b \cdot x^{-1} \right) \\
        &= \sum_{y \in \left< g^m \right>} \psi \left( g^r \cdot y + \frac{b}{g^{k \cdot m}} \cdot y^{-1} \right) \label{Equ: Kloosterman substituion} \\
        &= K \left(\psi, \left< g^m \right>, g^r, \frac{b}{g^{k \cdot m}} \right),
    \end{align}
    where \Cref{Equ: Kloosterman substituion} follows from the fact that $x \mapsto g^{k \cdot m} \cdot x = y$ is a bijection on $x \in \left< g^m \right>$.
    Therefore, to compute the full Kloosterman spectrum with respect to a fixed generator $g$ it is sufficient to compute the spectrum over $(a, b) \in \left\{ 1, g, \dots, g^{r - 1} \right\} \times \Fqx$.
    I.e., we only have to evaluate $m \cdot (q - 1)$ sums rather than the generic $(q - 1)^2$.

    In \Cref{Fig: Kloosterman sum} we have plotted the maximal value of Kloosterman sums over $\left< g^m \right>$ with $m \in \{ 2, 4, 8, 16 \}$ and primes $3 \leq p \leq 2048$.
    For $m = 2$, our bound from \Cref{Th: Walsh spectrum over subgroup} indeed becomes tight.
    Though, for increasing $m$ the distance between the proven bound and our data points seems to increase.
    \begin{figure}[H]
        \centering
        \begin{tikzpicture}[scale=0.8]
            \begin{axis}[
                xmin = 0, xmax = 2048,
                ymin = 0, ymax = 2.5,
                xtick distance = 500,
                ytick distance = 0.5,
                minor x tick num=4,
                minor y tick num=4,
                grid = both,
                major grid style = {lightgray},
                minor grid style = {lightgray!25},
                xlabel = {$p$},
                ylabel = {$\max_{a, b \in \mathbb{F}_p^\times} \frac{\left| K \left( \psi, \left< g^m \right>, a, b \right) \right|}{\sqrt{p}}$},
                legend style={at={(axis cs:2094,2.5)},anchor=north west},
                legend cell align = {left},
                ]

                \addplot[
                black,
                only marks,
                mark=x,
                mark size=1,
                ] table[skip first n=8, col sep=tab]
                {./Figure-Data/Kloosterman_m_2.log};

                \addplot[
                blue,
                only marks,
                mark=o,
                mark size=1,
                ] table[skip first n=8, col sep=tab]
                {./Figure-Data/Kloosterman_m_4.log};

                \addplot[
                olive,
                only marks,
                mark=square,
                mark size=1,
                ] table[skip first n=8, col sep=tab]
                {./Figure-Data/Kloosterman_m_8.log};

                \addplot[
                red,
                only marks,
                mark=diamond,
                mark size=2,
                ] table[skip first n=8, col sep=tab]
                {./Figure-Data/Kloosterman_m_16.log};

                \addlegendentry{$m = 2$}
                \addlegendentry{$m = 4$}
                \addlegendentry{$m = 8$}
                \addlegendentry{$m = 16$}
            \end{axis}
        \end{tikzpicture}
        \caption{Maximal Kloosterman sum over subgroups $\left< g^m \right> \subset \Fpx$ and primes $3 \leq p \leq 2048$.}
        \label{Fig: Kloosterman sum}
    \end{figure}

    \subsection{Power Residue S-Boxes With \texorpdfstring{$\mathbf{d = q - 2}$}{d = q - 2}}
    Next let us consider \Polocolo{} inspired S-Boxes $S (x) = x^{q - 2} \cdot \legendre{x}{q}_m$.
    To enumerate the spectrum efficiently we can exploit the following identity.
    \begin{lem}\label[lem]{Lem: efficient spectrum}
        Let $\Fq$ be a finite field, let $d, m \in \Z_{> 1}$ be such that $m \mid (q - 1)$, let $S (x) = x^d \cdot \legendre{x}{q}_m$, and let $a \in \Fqx$ and $b \in \Fq$.
        Then
        \begin{equation*}
            W_S (\psi, a, b)
            = W_S \Bigg( \psi, 1, \frac{b}{a^d} \cdot \legendre{a^{-1}}{q}_m \Bigg).
        \end{equation*}
    \end{lem}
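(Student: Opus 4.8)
The plan is to deduce the identity from a single linear substitution in the defining exponential sum. Starting from $\mathcal{W}_S (\psi, a, b) = \sum_{x \in \Fq} \psi \big( a \cdot x + b \cdot x^d \cdot \legendre{x}{q}_m \big)$, I would substitute $x = a^{-1} \cdot y$. Since $a \in \Fqx$, the map $y \mapsto a^{-1} \cdot y$ is a bijection of $\Fq$, so the summation range is unchanged and only the argument of $\psi$ transforms.

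Under this substitution the two summands behave as follows. First, $a \cdot x = a \cdot a^{-1} \cdot y = y$, which is precisely the ``$1 \cdot y$'' term appearing in $\mathcal{W}_S (\psi, 1, \cdot)$. Second, $x^d = a^{-d} \cdot y^d$. For the power residue factor, I would invoke that $x \mapsto \legendre{x}{q}_m = x^\frac{q - 1}{m}$ is completely multiplicative on all of $\Fq$, hence $\legendre{a^{-1} \cdot y}{q}_m = \legendre{a^{-1}}{q}_m \cdot \legendre{y}{q}_m$. Collecting the constants gives $b \cdot x^d \cdot \legendre{x}{q}_m = \big( \frac{b}{a^d} \cdot \legendre{a^{-1}}{q}_m \big) \cdot y^d \cdot \legendre{y}{q}_m = \big( \frac{b}{a^d} \cdot \legendre{a^{-1}}{q}_m \big) \cdot S (y)$, and therefore $\mathcal{W}_S (\psi, a, b) = \sum_{y \in \Fq} \psi \big( y + \frac{b}{a^d} \cdot \legendre{a^{-1}}{q}_m \cdot S (y) \big) = \mathcal{W}_S \big( \psi, 1, \frac{b}{a^d} \cdot \legendre{a^{-1}}{q}_m \big)$, as claimed.

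I do not anticipate any real obstacle here; the only point requiring a word of care is the complete multiplicativity of the $m$\textsuperscript{th} power residue symbol. This is immediate: on $\Fqx$ it is the restriction of the group homomorphism $x \mapsto x^\frac{q - 1}{m}$, and if either argument equals $0$ then both sides of $\legendre{x \cdot y}{q}_m = \legendre{x}{q}_m \cdot \legendre{y}{q}_m$ vanish. Equivalently, one may read this lemma as the statement that the multiplication-by-$a^{-1}$ action on $\Fq$ induces the map $(a, b) \mapsto \big( 1, \frac{b}{a^d} \cdot \legendre{a^{-1}}{q}_m \big)$ on linear approximations of $S$, which reduces the computation of the full Walsh spectrum to the slice $a = 1$.
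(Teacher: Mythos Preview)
Your proposal is correct and follows essentially the same approach as the paper: a single substitution $x = a^{-1} \cdot y$ together with the multiplicativity of $\legendre{\cdot}{q}_m$, which the paper carries out in one line without further comment. Your added justification of complete multiplicativity at $0$ is harmless but unnecessary, since the $x = 0$ term contributes $\psi(0)$ on both sides regardless.
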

    \begin{proof}
        A simple substitution yields
        \begin{align*}
            W_S (\psi, a, b)
            &= \sum_{x \in \Fq} \psi \Bigg( a \cdot x + b \cdot x^d \cdot \legendre{x}{q}_m \Bigg) \\
            &= \sum_{y \in \Fq} \psi \Bigg( y + \frac{b}{a^d} \cdot y^d \cdot \legendre{a^{-1}}{q}_m \cdot \legendre{y}{q}_m \Bigg) \\
            &= W_S \Bigg( \psi, 1, \frac{b}{a^d} \cdot \legendre{a^{-1}}{q}_m \Bigg). \qedhere
        \end{align*}
    \end{proof}

    I.e., for the Walsh spectrum of $S$ with $a \neq 0$ we only have to evaluate $(q - 1)$ sums rather than the generic $(q - 1)^2$.

    In \Cref{Fig: Inverse} we have plotted the maximal Walsh spectrum of $S$ with $m \in \{ 2, 4, 8, 16 \}$ and primes $3 \leq p \leq 2048$.
    Recall that our bound from \Cref{Cor: Walsh spectrum power residue S-Box inverse} is $2 \cdot m \cdot p^\frac{1}{2}$.
    For $m = 2$ the bound is almost matched for some small primes.
    Though, for increasing $m$ the distance between the proven bound and our data points seems to increase.

    \clearpage 
    \begin{figure}[H]
        \centering
        \begin{tikzpicture}[scale=0.8]
            \begin{axis}[
                xmin = 0, xmax = 2048,
                ymin = 0, ymax = 6,
                xtick distance = 500,
                ytick distance = 1,
                minor x tick num=4,
                minor y tick num=9,
                grid = both,
                major grid style = {lightgray},
                minor grid style = {lightgray!25},
                xlabel = {$p$},
                ylabel = {$\max_{a, b \in \mathbb{F}_p^\times} \frac{\left| \mathcal{W}_S \left( \psi, a, b \right) \right|}{\sqrt{p}}$},
                legend style={at={(axis cs:2094,6)},anchor=north west},
                legend cell align = {left},
                ]

                \addplot[
                black,
                only marks,
                mark=x,
                mark size=1,
                ] table[skip first n=8, col sep=tab]
                {./Figure-Data/PR_Sbox_Inverse_m_2.log};

                \addplot[
                blue,
                only marks,
                mark=o,
                mark size=1,
                ] table[skip first n=8, col sep=tab]
                {./Figure-Data/PR_Sbox_Inverse_m_4.log};

                \addplot[
                olive,
                only marks,
                mark=square,
                mark size=1,
                ] table[skip first n=8, col sep=tab]
                {./Figure-Data/PR_Sbox_Inverse_m_8.log};

                \addplot[
                red,
                only marks,
                mark=diamond,
                mark size=1,
                ] table[skip first n=8, col sep=tab]
                {./Figure-Data/PR_Sbox_Inverse_m_16.log};

                \addlegendentry{$m = 2$}
                \addlegendentry{$m = 4$}
                \addlegendentry{$m = 8$}
                \addlegendentry{$m = 16$}
            \end{axis}
        \end{tikzpicture}
        \caption{Maximal Walsh spectrum for $S (x) = x^{q - 2} \cdot \legendre{x}{q}_m$ and primes $3 \leq p \leq 2048$.}
        \label{Fig: Inverse}
    \end{figure}

    \subsection{Power Residue S-Boxes With Small \texorpdfstring{$\mathbf{d}$}{d}}
    Next let us consider S-Boxes of the form $S (x) = x^d \cdot \legendre{x}{q}_m$ with $d \ll q^\frac{1}{2}$.
    Again, we can use \Cref{Lem: efficient spectrum} to compute the spectrum efficiently.

    In \Cref{Fig: 1 <= d <= 5} we have plotted the maximal value of the Walsh spectrum with $1 \leq d \leq 5$,  $m \in \{ 2, 4, 8, 16 \}$ and primes $3 \leq p \leq 2048$.
    Recall that for $d > 1$ our bound from \Cref{Cor: Walsh spectrum power residue S-Box} is $(d \cdot m - 1) \cdot p^\frac{1}{2}$, and for none of our experiments it becomes tight.
    Also, for increasing $d$ and $m$ we observe an increasing distance between the proven bound and the data points.
    On the other hand, for $d = 1$ and $m = 2, 4$ our theoretical bound from \Cref{Cor: Walsh spectrum power residue S-Box d = 1} is matched, but for larger $m$ there is again an increasing gap between the theoretical bound and the empirical values.

    \subsection{Grassi et al.'s S-Box}
    For the Legendre symbol-based S-Box of Grassi et al.\ (\Cref{Sec: Legendre symbol-based S-Box}) we have computed the maximal Walsh spectrum for $(d_+, d_-) = (3, 5)$, $(d_+, d_-) = (5, 7)$ and primes $3 \leq p \leq 1024$.
    The results are depicted in \Cref{Fig: GKRS}.
    Our theoretical bounds from \Cref{Prop: Walsh spectrum Legendre symbol based S-Box} evaluate to $7 \cdot p^\frac{1}{2}$ and $11 \cdot p^\frac{1}{2}$ respectively.
    Again, they are not tightly matched.

    \begin{figure}[H]
        \centering
        \begin{tikzpicture}[scale=0.8]
            \begin{axis}[
                domain=1:2048,
                xmin = 0, xmax = 1024,
                ymin = 1, ymax = 6,
                xtick distance = 200,
                ytick distance = 1,
                minor x tick num=3,
                minor y tick num=9,
                grid = both,
                major grid style = {lightgray},
                minor grid style = {lightgray!25},
                xlabel = {$p$},
                ylabel = {$\max_{a, b \in \mathbb{F}_p^\times} \frac{\left| \mathcal{W}_S \left( \psi, a, b \right) \right|}{\sqrt{p}}$},
                legend style={at={(axis cs:1000,1.1)},anchor=south east},
                legend cell align = {left},
                ]

                \addplot[
                black,
                only marks,
                mark=x,
                mark size=1,
                ] table[skip first n=9, col sep=tab]
                {./Figure-Data/GKRS_Sbox_d_+_3_d_-_5.log};

                \addplot[
                blue,
                only marks,
                mark=o,
                mark size=1,
                ] table[skip first n=9, col sep=tab]
                {./Figure-Data/GKRS_Sbox_d_+_5_d_-_7.log};

                \addlegendentry{$d_+ = 3$, $d_- = 5$}
                \addlegendentry{$d_+ = 5$, $d_- = 7$}
            \end{axis}
        \end{tikzpicture}
        \caption{Maximal Walsh spectrum for $S (x) = \frac{x^{d_+} \cdot \Big( 1 + \legendre{x}{q}_2 \Big) + x^{d_-} \cdot \Big( 1 - \legendre{x}{q}_2 \Big)}{2}$ and primes $3 \leq p \leq 1024$.}
        \label{Fig: GKRS}
    \end{figure}

    \clearpage 
    \begin{figure}[H]
        \centering
        \begin{subfigure}{0.99\textwidth}
            \centering
            \begin{tikzpicture}[scale=0.75, declare function={ f (\q,\m) = sqrt(\q) / \m + \m - 2 + 1 / \m; }]
                \begin{axis}[
                    domain=1:2048,
                    xmin = 0, xmax = 2048,
                    ymin = 0.5, ymax = 25,
                    xtick distance = 500,
                    ytick distance = 5,
                    minor x tick num=4,
                    minor y tick num=4,
                    grid = both,
                    major grid style = {lightgray},
                    minor grid style = {lightgray!25},
                    xlabel = {$p$},
                    ylabel = {$\max_{a, b \in \mathbb{F}_p^\times} \frac{\left| \mathcal{W}_S \left( \psi, a, b \right) \right|}{\sqrt{p}}$},
                    legend style={at={(axis cs:2094,25)},anchor=north west},
                    legend cell align = {left},
                    ]

                    \addplot[
                    black,
                    only marks,
                    mark=x,
                    mark size=1,
                    ] table[skip first n=9, col sep=tab]
                    {./Figure-Data/PR_Sbox_d_1_m_2.log};

                    \addplot[
                    blue,
                    only marks,
                    mark=o,
                    mark size=1,
                    ] table[skip first n=9, col sep=tab]
                    {./Figure-Data/PR_Sbox_d_1_m_4.log};

                    \addplot[
                    olive,
                    only marks,
                    mark=square,
                    mark size=1,
                    ] table[skip first n=9, col sep=tab]
                    {./Figure-Data/PR_Sbox_d_1_m_8.log};

                    \addplot[
                    red,
                    only marks,
                    mark=diamond,
                    mark size=1,
                    ] table[skip first n=9, col sep=tab]
                    {./Figure-Data/PR_Sbox_d_1_m_16.log};

                    \addplot [black,
                    smooth] {
                        f(x, 2)
                    };

                    \addplot [blue,
                    smooth] {
                        f(x, 4)
                    };

                    \addplot [olive,
                    smooth] {
                        f(x, 8)
                    };

                    \addplot [red,
                    smooth] {
                        f(x, 16)
                    };

                    \addlegendentry{$m = 2$}
                    \addlegendentry{$m = 4$}
                    \addlegendentry{$m = 8$}
                    \addlegendentry{$m = 16$}
                    \addlegendentry{$\frac{\sqrt{p}}{2} + \frac{1}{2}$}
                    \addlegendentry{$\frac{\sqrt{p}}{4} + \frac{9}{4}$}
                    \addlegendentry{$\frac{\sqrt{p}}{8} + \frac{49}{8}$}
                    \addlegendentry{$\frac{\sqrt{p}}{16} + \frac{225}{16}$}
                \end{axis}
            \end{tikzpicture}
            \caption{$d = 1$.}
        \end{subfigure}
        \begin{subfigure}{0.49\textwidth}
            \centering
            \resizebox{\columnwidth}{!}{
                \begin{tikzpicture}
                    \begin{axis}[
                        xmin = 0, xmax = 2048,
                        ymin = 0.5, ymax = 4.5,
                        xtick distance = 500,
                        ytick distance = 0.5,
                        minor x tick num=4,
                        minor y tick num=4,
                        grid = both,
                        major grid style = {lightgray},
                        minor grid style = {lightgray!25},
                        xlabel = {$p$},
                        ylabel = {$\max_{a, b \in \mathbb{F}_p^\times} \frac{\left| \mathcal{W}_S \left( \psi, a, b \right) \right|}{\sqrt{p}}$},
                        legend style={at={(axis cs:2002,0.6)},anchor=south east},
                        legend cell align = {left},
                        ]

                        \addplot[
                        black,
                        only marks,
                        mark=x,
                        mark size=1,
                        ] table[skip first n=9, col sep=tab]
                        {./Figure-Data/PR_Sbox_d_2_m_2.log};

                        \addplot[
                        blue,
                        only marks,
                        mark=o,
                        mark size=1,
                        ] table[skip first n=9, col sep=tab]
                        {./Figure-Data/PR_Sbox_d_2_m_4.log};

                        \addplot[
                        olive,
                        only marks,
                        mark=square,
                        mark size=1,
                        ] table[skip first n=9, col sep=tab]
                        {./Figure-Data/PR_Sbox_d_2_m_8.log};

                        \addplot[
                        red,
                        only marks,
                        mark=diamond,
                        mark size=1,
                        ] table[skip first n=9, col sep=tab]
                        {./Figure-Data/PR_Sbox_d_2_m_16.log};

                        \addlegendentry{$m = 2$}
                        \addlegendentry{$m = 4$}
                        \addlegendentry{$m = 8$}
                        \addlegendentry{$m = 16$}
                    \end{axis}
                \end{tikzpicture}
            }
            \caption{$d = 2$}
            \label{Fig: d = 2}
        \end{subfigure}
        \hfill
        \begin{subfigure}{0.49\textwidth}
            \centering
            \resizebox{\columnwidth}{!}{
                \begin{tikzpicture}
                    \begin{axis}[
                        xmin = 0, xmax = 2048,
                        ymin = 0.5, ymax = 5,
                        xtick distance = 500,
                        ytick distance = 0.5,
                        minor x tick num=4,
                        minor y tick num=4,
                        grid = both,
                        major grid style = {lightgray},
                        minor grid style = {lightgray!25},
                        xlabel = {$p$},
                        ylabel = {$\max_{a, b \in \mathbb{F}_p^\times} \frac{\left| \mathcal{W}_S \left( \psi, a, b \right) \right|}{\sqrt{p}}$},
                        legend style={at={(axis cs:2002,0.6)},anchor=south east},
                        legend cell align = {left},
                        ]

                        \addplot[
                        black,
                        only marks,
                        mark=x,
                        mark size=1,
                        ] table[skip first n=9, col sep=tab]
                        {./Figure-Data/PR_Sbox_d_3_m_2.log};

                        \addplot[
                        blue,
                        only marks,
                        mark=o,
                        mark size=1,
                        ] table[skip first n=9, col sep=tab]
                        {./Figure-Data/PR_Sbox_d_3_m_4.log};

                        \addplot[
                        olive,
                        only marks,
                        mark=square,
                        mark size=1,
                        ] table[skip first n=9, col sep=tab]
                        {./Figure-Data/PR_Sbox_d_3_m_8.log};

                        \addplot[
                        red,
                        only marks,
                        mark=diamond,
                        mark size=1,
                        ] table[skip first n=9, col sep=tab]
                        {./Figure-Data/PR_Sbox_d_3_m_16.log};

                        \addlegendentry{$m = 2$}
                        \addlegendentry{$m = 4$}
                        \addlegendentry{$m = 8$}
                        \addlegendentry{$m = 16$}
                    \end{axis}
                \end{tikzpicture}
            }
            \caption{$d = 3$}
            \label{Fig: d = 3}
        \end{subfigure}
        \begin{subfigure}{0.49\textwidth}
            \centering
            \resizebox{\columnwidth}{!}{
                \begin{tikzpicture}
                    \begin{axis}[
                        xmin = 0, xmax = 2048,
                        ymin = 0.5, ymax = 5,
                        xtick distance = 500,
                        ytick distance = 0.5,
                        minor x tick num=4,
                        minor y tick num=4,
                        grid = both,
                        major grid style = {lightgray},
                        minor grid style = {lightgray!25},
                        xlabel = {$p$},
                        ylabel = {$\max_{a, b \in \mathbb{F}_p^\times} \frac{\left| \mathcal{W}_S \left( \psi, a, b \right) \right|}{\sqrt{p}}$},
                        legend style={at={(axis cs:2002,0.6)},anchor=south east},
                        legend cell align = {left},
                        ]

                        \addplot[
                        black,
                        only marks,
                        mark=x,
                        mark size=1,
                        ] table[skip first n=9, col sep=tab]
                        {./Figure-Data/PR_Sbox_d_4_m_2.log};

                        \addplot[
                        blue,
                        only marks,
                        mark=o,
                        mark size=1,
                        ] table[skip first n=9, col sep=tab]
                        {./Figure-Data/PR_Sbox_d_4_m_4.log};

                        \addplot[
                        olive,
                        only marks,
                        mark=square,
                        mark size=1,
                        ] table[skip first n=9, col sep=tab]
                        {./Figure-Data/PR_Sbox_d_4_m_8.log};

                        \addplot[
                        red,
                        only marks,
                        mark=diamond,
                        mark size=1,
                        ] table[skip first n=9, col sep=tab]
                        {./Figure-Data/PR_Sbox_d_4_m_16.log};

                        \addlegendentry{$m = 2$}
                        \addlegendentry{$m = 4$}
                        \addlegendentry{$m = 8$}
                        \addlegendentry{$m = 16$}
                    \end{axis}
                \end{tikzpicture}
            }
            \caption{$d = 4$}
            \label{Fig: d = 4}
        \end{subfigure}
        \hfill
        \begin{subfigure}{0.49\textwidth}
            \centering
            \resizebox{\columnwidth}{!}{
                \begin{tikzpicture}
                    \begin{axis}[
                        xmin = 0, xmax = 2048,
                        ymin = 0.5, ymax = 5,
                        xtick distance = 500,
                        ytick distance = 0.5,
                        minor x tick num=4,
                        minor y tick num=4,
                        grid = both,
                        major grid style = {lightgray},
                        minor grid style = {lightgray!25},
                        xlabel = {$p$},
                        ylabel = {$\max_{a, b \in \mathbb{F}_p^\times} \frac{\left| \mathcal{W}_S \left( \psi, a, b \right) \right|}{\sqrt{p}}$},
                        legend style={at={(axis cs:2002,0.6)},anchor=south east},
                        legend cell align = {left},
                        ]

                        \addplot[
                        black,
                        only marks,
                        mark=x,
                        mark size=1,
                        ] table[skip first n=9, col sep=tab]
                        {./Figure-Data/PR_Sbox_d_5_m_2.log};

                        \addplot[
                        blue,
                        only marks,
                        mark=o,
                        mark size=1,
                        ] table[skip first n=9, col sep=tab]
                        {./Figure-Data/PR_Sbox_d_5_m_4.log};

                        \addplot[
                        olive,
                        only marks,
                        mark=square,
                        mark size=1,
                        ] table[skip first n=9, col sep=tab]
                        {./Figure-Data/PR_Sbox_d_5_m_8.log};

                        \addplot[
                        red,
                        only marks,
                        mark=diamond,
                        mark size=1,
                        ] table[skip first n=9, col sep=tab]
                        {./Figure-Data/PR_Sbox_d_5_m_16.log};

                        \addlegendentry{$m = 2$}
                        \addlegendentry{$m = 4$}
                        \addlegendentry{$m = 8$}
                        \addlegendentry{$m = 16$}
                    \end{axis}
                \end{tikzpicture}
            }
            \caption{$d = 5$}
            \label{Fig: d = 5}
        \end{subfigure}
        \caption{Maximal Walsh spectrum for power residue S-Boxes $S (x) = x^d \cdot \legendre{x}{p}_m$ and primes $3 \leq p \leq 2048$.}
        \label{Fig: 1 <= d <= 5}
    \end{figure}

    \section{Discussion}
    In this paper we estimated the Walsh spectrum of power residue S-Boxes.
    For non-triviality of our estimations, in \Cref{Th: Kloosterman spectrum over subgroup,Th: Walsh spectrum over subgroup} we implicitly assumed that the subgroup $\mathcal{G} \subset \Fqx$ is large, i.e.\ $\abs{\mathcal{G}} > q^\frac{1}{2}$.
    Otherwise, we could always bound the character sums by $\abs{\mathcal{G}}$.
    Equivalently, in terms of \Cref{Cor: Walsh spectrum power residue S-Box inverse}, \ref{Cor: Walsh spectrum power residue S-Box} and \Cref{Prop: Walsh spectrum Legendre symbol based S-Box} we require that the divisor $m \mid (q - 1)$ is small enough so that $\frac{q - 1}{m} > q^\frac{1}{2}$.

    In case one has to estimate the Walsh spectrum of power residue S-Boxes with $\frac{q - 1}{m} < q^\frac{1}{2}$ one could try to incorporate the estimations from \cite{Ostafe-Equations,Ostafe-Weil} to improve upon the trivial bound.
    However, at least in the context of Zero-Knowledge-friendly primitives we are not aware of any power residue S-Box with $\frac{q - 1}{m} < q^\frac{1}{2}$.

    We also note that the estimations of \Cref{Th: Walsh spectrum over subgroup} and \Cref{Cor: Walsh spectrum power residue S-Box} can be straightforwardly generalized to S-Boxes of the form
    \begin{equation}
        S (x) = f (x) \cdot T \Bigg( \legendre{x}{q}_m \Bigg),
    \end{equation}
    where $f \in \Fq [x]$ is an arbitrary polynomial with $\gcd  \big( \degree{f} , q \big) = 1$ and $1 < \degree{f} \cdot m < q$.
    For this case we have
    \begin{align}
        \sum_{x \in N_r} \psi \Bigg( a \cdot x + b \cdot f (x) \cdot T \Bigg( \legendre{x}{q}_m \Bigg) \Bigg)
        &= \sum_{x \in N_r} \psi \Bigg( a \cdot x + b \cdot f (x) \cdot T \bigg( g^{r \cdot \frac{q - 1}{m}} \bigg) \Bigg) \\
        &= \sum_{x \in N_0} \psi \Bigg( a \cdot x \cdot g^r + b \cdot f (x \cdot g^r) \cdot T \bigg( g^{r \cdot \frac{q - 1}{m}} \bigg) \Bigg) \\
        &= \sum_{x \in N_0} \psi \Bigg( a \cdot x \cdot g^r + b \cdot \tilde{f} (x) \cdot T \bigg( g^{r \cdot \frac{q - 1}{m}} \bigg) \Bigg),
    \end{align}
    where $\tilde{f} (x) = f (x \cdot g^r) \in \Fq [x]$ is again of degree $\degree{f}$ for any $g^r$, and all our estimations can be extended.

    Finally, let us quickly showcase how our estimations can be applied to estimate the correlation of \Polocolo{} linear trails.
    Let $\mathbf{M} \in \Fq^{t \times t}$ be an invertible matrix, let $\mathbf{c} \in \Fq^t$ be a constant, and let $S: \Fq \to \Fq$ be a \Polocolo{} S-Box \cite[\S 3]{EC:HHLPS25} with power residue $m = 2^n < 2^{-2} \cdot q^\frac{1}{2}$.
    A \Polocolo{} round function is then simply a SPN
    \begin{equation}
        \mathcal{R} (\mathbf{x}) = \mathbf{M}
        \begin{pmatrix}
            S (x_1) \\ \vdots \\ S (x_n)
        \end{pmatrix}
        + \mathbf{c}.
    \end{equation}
    Now let $\mathbf{a}, \mathbf{b} \in \Fq^t \setminus \{ \mathbf{0} \}$ be linear masks, then the Walsh spectrum of the linear approximation $(\mathbf{a}, -\mathbf{b})$ is given by
    \begin{align}
        \mathcal{W}_\mathcal{R} \left( \psi, \mathbf{a}, -\mathbf{b} \right)
        &= \sum_{\mathbf{x} \in \Fq^t} \psi \big( \mathbf{a}^\intercal \mathbf{x} - \mathbf{b}^\intercal \mathcal{R} (\mathbf{x}) \big) \\
        &= \psi \left( \mathbf{b}^\intercal \mathbf{c} \right) \cdot \sum_{\mathbf{x} \in \Fq^t} \psi \left( \mathbf{a}^\intercal \mathbf{x} - \sum_{i = 1}^{t} \left( \mathbf{M}^\intercal \mathbf{b} \right)_i \cdot S (x_i) \right) \\
        &= \psi \left( \mathbf{b}^\intercal \mathbf{c} \right) \cdot \sum_{\mathbf{x} \in \Fq^t} \prod_{i = 1}^{t} \psi \big( a_i \cdot x_i - \left( \mathbf{M}^\intercal \mathbf{b} \right)_i \cdot S (x_i) \big) \\
        &= \psi \left( \mathbf{b}^\intercal \mathbf{c} \right) \cdot \prod_{i = 1}^{t} \mathcal{W}_S \big( \psi, a_i, - \left( \mathbf{M}^\intercal \mathbf{b} \right)_i \big).
    \end{align}
    Since $S$ is assumed to be a permutation this expression is equal to zero whenever $a_i \cdot \left( \mathbf{M}^\intercal \mathbf{b} \right)_i = 0$ for some $i$.
    Thus, with \Cref{Ex: Polocolo} we can now bound the correlation of a \Polocolo{} round for $\mathbf{a}, \mathbf{b} \neq \mathbf{0}$
    \begin{equation}
        \abs{\CORR_\mathcal{R} \left( \psi, \mathbf{a}, \mathbf{b} \right)} \leq
        \begin{dcases}
            0, & \exists i \! : a_i \cdot \left( \mathbf{M}^\intercal \mathbf{b} \right)_i = 0, \\
            \left( \frac{2^{n + 2}}{q^\frac{1}{2}} \right)^{\wt \left( \mathbf{a} \right)}, & \text{else},
        \end{dcases}
    \end{equation}
    where $\wt: \Fq^t \to \Z$ denotes the Hamming weight of a vector.
    For $r$ \Polocolo{} rounds $\mathcal{R}_r \circ \cdots \circ \mathcal{R}_1$, let $(\mathbf{a}_0, \dots, \mathbf{a}_r) \in \Fq^{n \cdot (r + 1)}$ be a linear trail, i.e.\ $(\mathbf{a}_{i - 1}, -\mathbf{a}_i)$ is the linear approximation of the $i$\textsuperscript{th} round.
    Then, by multiplicativity of the correlation of a linear trail \cite{AC:Beyne21} we have
    \begin{align}
        \abs{\CORR_{\mathcal{R}_r \circ \cdots \circ \mathcal{R}_1} \left( \psi, \mathbf{a}_0, \dots, \mathbf{a}_r \right)}
        &= \prod_{j = 1}^{r}
        \begin{dcases}
            0, & \exists i \! : a_{j - 1, i} \cdot \left( \mathbf{M}^\intercal \mathbf{a}_j \right)_i = 0, \\
            \left( \frac{2^{n + 2}}{q^\frac{1}{2}} \right)^{\wt \left( \mathbf{a}_j \right)}, & \text{else},
        \end{dcases}
        \\
        &\leq \prod_{j = 1}^{r} \left( \frac{2^{n + 2}}{q^\frac{1}{2}} \right)^{\wt \left( \mathbf{a}_j \right)}
        \leq \left( \frac{2^{n + 2}}{q^\frac{1}{2}} \right)^{r}.
    \end{align}
    Since \Polocolo{} requires that $\log_2 \left( q \right) \approx 256$ and the target security level is $\approx 128$ bits, the correlation is too small for a competitive linear distinguisher.


    \bibliographystyle{alphaurl}
    \bibliography{abbrev3.bib,crypto.bib,literature.bib}

\end{document}